\documentclass[review]{elsarticle}
\usepackage{amssymb}
\usepackage{amssymb}
\usepackage{mathrsfs}
\usepackage{stmaryrd}
\usepackage{amsfonts}
\usepackage{amsmath}    
\usepackage{amssymb}    
\usepackage[amsmath, thmmarks]{ntheorem} 
\usepackage{subfigure}                                 
\usepackage[top=0.8cm, bottom=0.8cm, left=1.9cm, right=1.9cm, dvips, dvipdfm, includehead, includefoot]{geometry}
\usepackage{indentfirst}               %
\usepackage{fancyhdr,  mathrsfs}
\usepackage{graphicx}
\usepackage{float}
\journal{Journal of Computational Physics}









\bibliographystyle{elsarticle-num}

\begin{document}
\newtheorem{algorithm}{\bf{Algorithm}}[section]
\newtheorem{assumption}{\bf{Assumption}}[section]
\newtheorem{lemma}{\bf{Lemma}}[section]
\newtheorem{theorem}{\bf{Theorem}}[section]
\newtheorem{definition}{\bf{Definition}}[section]
\newtheorem{proposition}{\bf{Proposition}}[section]
\newtheorem{remark}{\bf{Remark}}[section]
\newtheorem{example}{\bf{Example}}[section]
\newtheorem{proof}{\it{Proof.}}
\begin{frontmatter}

\title{A positivity-preserving finite volume element method for anisotropic diffusion problems on quadrilateral meshes}

\tnotetext[mytitlenote]{This work was partially supported by the Postdoctoral Science Foundation of China (no.2017M620689), the National Science Foundation of China (nos. 11571048 and 11401034), and the CAEP Developing Fund of Science and Technology (no. 2014A0202009).}

\author[mymainaddress]{Yanni Gao}
\ead{gaoyn10@163.com}

\author[mymainaddress,mysecondaryaddress]{Guangwei Yuan}
\ead{yuan\_guangwei@iapcm.ac.cn}

\author[mymainaddress]{Shuai Wang}
\ead{wang\_shuai@iapcm.ac.cn}

\author[mymainaddress,mysecondaryaddress]{Xudeng Hang\corref{mycorrespondingauthor}}
\cortext[mycorrespondingauthor]{Corresponding author}
\ead{hang\_xudeng@iapcm.ac.cn}

\address[mymainaddress]{Institute of Applied Physics and Computational Mathematics, Fenghaodong Road, Haidian District, Beijing 100094, China}
\address[mysecondaryaddress]{Laboratory of Computational Physics, PO Box 8009, Beijing, 100088, China}

\begin{abstract}
In this paper, we propose a nonlinear positivity-preserving finite volume element(FVE) scheme for anisotropic diffusion problems on quadrilateral meshes. Based on an overlapping dual partition, the one-sided flux is approximated by the iso-parametric bilinear element. A positivity-preserving nonlinear scheme with vertex-centered unknowns is obtained by a new two-point flux technique, which avoids the convex decomposition of co-normals and the introduction of intermediate unknowns. The existence of a solution is proved for this nonlinear system by applying the Brouwer's theorem. Numerical results show that the proposed positivity-preserving scheme is effective on distorted quadrilateral meshes and has approximate second-order accuracy for both isotropic and anisotropic diffusion problems. Moreover, the presented scheme is applied on an equilibrium radiation diffusion problem with discontinuous coefficients. Numerical results show that the new scheme is much more efficient than the standard FVE method.
\end{abstract}

\begin{keyword}
Finite volume element method, Positivity-preserving, Anisotropic diffusion, Quadrilateral mesh, Equilibrium radiation diffusion
\end{keyword}

\end{frontmatter}


\section{Introduction}
\label{intro}
\noindent Positivity-preserving or ¡°monotonicity¡± is a significant requirement for discrete schemes of diffusion equations. A scheme without such a property can lead to non-physical solutions or oscillation. In some applications, such as Lagrangian computation of hydrodynamic problems and reservoir simulations, meshes are usually distorted and the coefficients are heterogeneous and anisotropic. These two factors may cause diffusion schemes more readily to produce non-physical negative solutions. To avoid this problem, considerable attention has been given to the development of numerical schemes which respect the positivity-preserving property.

Owing to local conservation of finite volume(FV) methods, several positivity-preserving FV schemes have been proposed\cite{W3,W12,W14,W16,WW4,W26,W35}. It is shown in \cite{J12} that no linear and monotone nine-point scheme with second-order accuracy exists on any distorted quadrilateral mesh or for any anisotropic diffusion. Therefore, to get a second-order monotone scheme, one must reference nonlinear monotone schemes. In \cite{J13}, a nonlinear FV scheme is proposed for a highly anisotropic diffusion operator on unstructured triangular meshes. Since its proposal, this approach has been further developed to obtain schemes that preserve positivity\cite{W3,W14,W16,W26,W35} and maximum value principle\cite{W12,WW4} on general grids. In \cite{W35}, a nonlinear FV scheme with little severe restriction to the collocation points is presented, which has been further improved in \cite{W26,S1} and extended to non-equilibrium radiation diffusion problems\cite{J19} and advection diffusion problems\cite{J21}. In these works, the convex decomposition of co-normals and the positivity-preserving interpolation of auxiliary unknowns are two key components. However, the convex decomposition of co-normals can result in a non-fixed stencil. Moreover, the positivity-preserving interpolation of auxiliary unknowns is hard to guarantee on general distorted meshes or for anisotropic diffusion problems if we strive to obtain a scheme with accuracy higher than the first order.

In recent years, improved FV schemes have been further developed. In \cite{S1}, a nonlinear FV scheme with second-order accuracy is proposed, which avoids the assumption that values of auxiliary unknowns are nonnegative. In \cite{W17}, the authors present an interpolation-free approach based on local repositioning of cell centers, though it is applicable for cells with only one discontinuous face. In \cite{W6}, a nonlinear scheme with two sets of primary unknowns is constructed, which is interpolation-free. Nevertheless, this scheme can not achieve the second-order accuracy in the case of discontinuous diffusion coefficients. Nonlinear two-point flux approximations that differ from the above approaches are further developed in \cite{CV,W13,Wu}, where neither the positive interpolation of auxiliary unknowns nor the convex decomposition of co-normals is required.

The finite volume element(FVE) method\cite{Ref16}, also called the generalized difference method\cite{Refff16}, has the properties of simple calculation and local conservation. Compared with FV methods, an advantage of FVE methods is the theoretical basis. Recently, many studies have been devoted to developing higher-order FVE schemes and establishing their error estimations. The readers can refer to the papers\cite{Ref26,QiHongJimingWu,Ref22,RRR8} and the references therein. However, almost all of their construction and corresponding theoretical analysis are executed for problems with scalar or even constant diffusion coefficients, and they focus on the proofs of coercivity and optimal convergence rates. The development of FVE schemes satisfying positivity preservation is far more difficult to conduct, especially for problems with anisotropic diffusion. This difficulty has impeded the application of FVE methods on some complicated problems. Although there exist some works devoted to the application of FVE methods on complicated numerical simulations\cite{ShuShiHuangYunqing,L2}, these schemes are not monotone and the non-physical solutions are just modified by a simple cutoff method\cite{cutoff} or repair techniques\cite{M14}. To our best knowledge, few monotone FVE schemes have been found until now. Therefore, the primary motivation of the present study is to construct a nonlinear monotone FVE scheme with approximate second-order accuracy, which can be applied to real practical problems.

In this paper, we construct a nonlinear monotone FVE scheme in the framework of the iso-parametric bilinear FVE method on quadrilateral meshes. The traditional barycenter dual partition is replaced by an overlapping one, which make it possible to construct a positivity-preserving FVE scheme. Firstly, we employ the iso-parametric bilinear element to approximate the gradients on the barycenters, which is used to get a continuous one-sided flux on each edge of dual elements. Finally, a new two-point flux technique is applied to obtain a monotone scheme with vertex-centered unknowns. In order to make the resulted scheme regular with respect to the unknowns, two parameters are introduced which can still keep the scheme's second-order accuracy. Accordingly, we prove the existence of a solution for this nonlinear system by the Brouwer's theorem. It should be pointed out that the nonlinear two-point flux technique employed in this paper differs from those considered in \cite{CV,W13,Wu} and is also applicable to other second-order schemes. Compared with the standard FVE scheme\cite{ShuShiHuangYunqing,Shu16}, the proposed monotone FVE scheme has comparable accuracy and can significantly reduce the computational costs. Besides, the construction and implementation of the new scheme are simpler than FV schemes, because the auxiliary unknowns interpolation and the decomposition of co-normals are not needed.

The remainder of this paper is organized as follows. We briefly introduce the standard FVE method on quadrilateral meshes in Section 2. In Section 3, we describe the construction of the monotone FVE scheme and prove the existence of a solution for this new scheme. In Section 4, numerical results are presented to evaluate the performance of the proposed FVE scheme. Finally, conclusions are provided in Section 5.
\section{Preliminaries of the standard FVE method}
\label{sec:2}
Consider the unsteady diffusion problem:
\begin{align}
\label{eq:F1}
\partial_{t}u-\nabla \cdot(\kappa \nabla u ) &= f  & \text{in~}\, \Omega\times(0,T),\\
\label{eq:F2}
\gamma (\kappa \nabla u)\cdot \mathbf{n}+\delta u&= g & \text{on~}\, \partial \Omega\times(0,T),
\end{align}
where

\begin{description}
  \item[(a)] $u=u(x,t)$ is the solution of the diffusion equation;
  \item[(b)] $\Omega$ is an open-bounded, convex-connected polygonal domain in $\mathbb{R}^2$ with boundary $\partial \Omega$, and $T$ is a positive constant;
  \item[(c)] The source term $f\in L^2(\Omega\times(0,T))$ and the boundary data $g\in H^{1/2}(\partial\Omega\times(0,T))$;
  \item[(d)] $\kappa$ is a symmetric tensor such that $\kappa$ is piecewise continuous on $\Omega\times(0,T)$. It may be discontinuous on some interfaces, and the set of eigenvalues of $\kappa$ is included in $[\lambda_{\text{min}},\lambda_{\text{max}}]$ with $\lambda_{\text{min}}>0$.
  \item[(e)] $\gamma$ and $\delta$ are smooth functions such that
  \begin{equation*}
    \forall~ (x,t)\in \partial\Omega\times(0,T),~~~   \delta(x,t)\geq0, ~\text{and}~ \gamma(x,t)\geq \gamma_0>0.
  \end{equation*}
\end{description}
The initial data is given by
\begin{equation*}
u(x,0)=u_0(x)\in H^1(\Omega).
\end{equation*}

Divide $\overline{\Omega}$ into a set of strictly convex quadrilaterals such that different quadrilaterals have no common interior point. Accordingly, the vertexes of any quadrilateral do not lie on the interior of an edge of any other quadrilateral, and any vertex on the boundary is a vertex of some quadrilateral. Thus, we obtain a primary partition $\mathcal{T}_h$, where $h$ is the largest diameter of all quadrilaterals. For any element
$K\in \mathcal{T}_h$ with vertexes $P_i,i=1,2,3,4,$ as shown in Figure \ref{Fig1}, let $M_i$ be the midpoint of
edge $P_iP_{i+1}(P_{5}=P_{1})$ and $Q$ be the barycenter of $K$. Let $\Omega_{h}$ be the set of nodes of the mesh. The number of all nodes is denoted by $\mathcal{N}$.
\begin{figure}
\centerline
{
\includegraphics[width=2.2in]{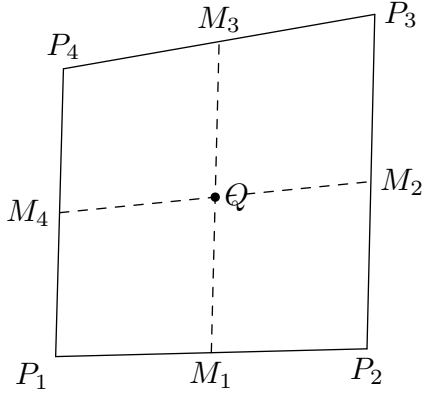}
}
\centering
\caption{Traditional control volumes in a quadrilateral element}

\label{Fig1}
\end{figure}

Let $h_K$ denote the diameter of the element $K\in \mathcal{T}_h$, $\rho_K$ denote the maximum
diameter of circles contained in $K$, and $S_K$ denote the area of K. Suppose $\mathcal{T}_h$ is regular, i.e., there exist two positive constants $C$ and $C_1$ such that for any element $K\in \mathcal{T}_h$
\begin{eqnarray}
&&\frac{h_K}{\rho_K}\leq C, \label{AS1}\\
&&\mid \cos \theta_K \mid\leq C_1<1, \text{where}~\theta_K ~\text{is any interior angle of}~K. \label{AS2}
\end{eqnarray}

To define the trial function space $U_h$, we take the unit square $\widehat{K}=[0,1]\times[0,1]$ on the $(\xi, \eta)$ plane as a reference element. For any convex quadrilateral $K$, there exists a unique invertible bilinear mapping $F_K$, which maps $\widehat{K}$ onto $K$.
\begin{equation}\label{EF3}
F_K=\left\{\begin{array}{ll}
x=x_1+a_1\xi+a_2\eta+a_3\xi\eta, \\
y=y_1+b_1\xi+b_2\eta+b_3\xi\eta,
\end{array}
\right.
\end{equation}
where
\begin{eqnarray*}
a_1=x_2-x_1, a_2=x_4-x_1, a_3=x_1-x_2+x_3-x_4, \\
b_1=y_2-y_1,~ b_2=y_4-y_1,~ b_3=y_1-y_2+y_3-y_4,
\end{eqnarray*}
and $(x_i,y_i), i=1,2,3,4,$ are the coordinates of vertices $P_i(i=1,2,3,4)$ on element $K$.
Denote the Jacobi matrix of the mapping $F_K$ by $J_K(\xi,\eta)$, then
\begin{equation}\label{eqF4}
J_K(\xi,\eta)=\left(
                          \begin{array}{cc}
                            \frac{\partial x}{\partial \xi} & \frac{\partial x}{\partial \eta} \\
                            \frac{\partial y}{\partial \xi} & \frac{\partial y}{\partial \eta} \\
                          \end{array}
                        \right)=\left(
                                  \begin{array}{cc}
                                    a_1+a_3\eta & a_2+a_3\xi \\
                                    b_1+b_3\eta & b_2+b_3\xi \\
                                  \end{array}
                                \right).
\end{equation}
Let $\mathcal{J}_{Q}$ be the determinant of the Jacobi matrix with respect to $Q$.
Simple calculation shows that
\begin{eqnarray*}
 &&\mathcal{J}_{Q}=S_K.
\end{eqnarray*}
Due to the regularity of $\mathcal{T}_h$, there exists a positive constant $C$ such that
\begin{eqnarray}\label{reg1}
\mathcal{J}_{Q}\geq Ch_{K}^{2}.
\end{eqnarray}

Denote by $U_h(\widehat{K})$ the iso-parametric bilinear polynomial space on $\widehat{K}$, then the trial function space $U_h$ is defined as
\begin{eqnarray}\label{eqf5}
U_h=\{u_h\in H^{1}(\overline{\Omega}): u_h\mid_K=\widehat{u}_h\circ F_{K}^{-1}, \widehat{u}_h \in U_h(\widehat{K}), K\in \mathcal{T}_h\},
\end{eqnarray}
where $\widehat{u}_h\circ F_{K}^{-1}$ denotes the compound function of $\widehat{u}_h$ and the inverse mapping $F_{K}^{-1}.$

The control volume of vertex $P_i$ in $K$, as shown in Figure \ref{Fig1}, is the subregion $M_{i-1}P_iM_iQ, i=1,2,3,4$, where $M_0=M_4.$
Hence, the control volume associated with vertex $P$ is denoted by $K_{P}^{\ast}$, which is the union of the above subregions sharing the vertex $P$. The set of all control volumes forms a dual partition $\mathcal{T}_{h}^{\ast}$.

Define the test function space as
\begin{equation}\label{eqf6}
V_h=\{v_h\in L^2(\Omega): v_h\mid_{K_{P}^{\ast}}=\text{constant}, \forall ~P\in \Omega_{h}\}.
\end{equation}

The standard FVE scheme for (\ref{eq:F1})-(\ref{eq:F2}) on quadrilateral meshes is to find $u_h(x,\cdot)\in U_h$, such that
\begin{equation}\label{eqF7}
(\partial_{t}u_h,v_h)+a_h(u_h,v_h)=(f,v_h)~\forall~v_h\in V_h
\end{equation}
with
\begin{eqnarray}
a_h(u_h,v_h)=-\sum_{K_{P}^{\ast}\in \mathcal{T}_{h}^{\ast}} \int_{\partial K_{P}^{\ast}} \kappa \nabla u_h\cdot \mathbf{n} v_h ds=-\sum_{K_{P}^{\ast}\in \mathcal{T}_{h}^{\ast}}v_h(P)\int_{\partial K_{P}^{\ast}} \kappa \nabla u_h\cdot \mathbf{n}ds, \label{eqF8}
\end{eqnarray}
\begin{eqnarray}
(f,v_h)=\sum_{K_{P}^{\ast}\in \mathcal{T}_{h}^{\ast}}\int_{K_{P}^{\ast}} fv_h dx=\sum_{K_{P}^{\ast}\in \mathcal{T}_{h}^{\ast}}v_h(P)\int_{K_{P}^{\ast}} fdx \label{eqF9},
\end{eqnarray}
where $P\in \Omega_{h}$ and $\mathbf{n}$ is the outward unit normal vector of $\partial K_{P}^{\ast}$.

In \cite{Refff16}, detailed proofs are provided for the coercivity and error estimations of the FVE scheme (\ref{eqF7}). However, a study on the monotonicity of FVE methods is difficult to conduct and few works can be found. Owing to the nonlinearity of the iso-parametric bilinear element gradient, constraints on computational meshes and material properties prevent FVE methods from practical applications. The numerical results presented in \cite{L2} have shown that the standard FVE schemes can lead to negative solutions which are non-physical and make the computation terminate. To overcome this drawback, authors in \cite{L2} employ two nonnegative modifications, the cutoff method in \cite{cutoff} and repair techniques in \cite{M14}, to handle this problem. However, the cutoff method suffers from the conservation error, and the local repair technique increases the computational cost and breaks the governing equations. Therefore, it is urgent to construct a positivity-preserving FVE scheme.
\section{Monotone FVE scheme}
\label{sec:4}
In this section, we construct a monotone FVE scheme for the spacial discretization. For simplicity and without confusion, we omit the variate $t$ in the following description.

To overcame the difficulties of standard FVE method, we define a new dual partition formed by the diagonals of all quadrilateral elements. For a node $P$, as shown in Figure \ref{Fig2}, let the shaded area be the control volume associated with $P$, which is denoted by $K_{P}^{\ast}.$ The set of all such control volumes forms a new dual partition, which is also denoted by $\mathcal{T}_{h}^{\ast}$. Obviously, this new dual partition is overlapping.
\begin{figure}[h]
\centerline
{
\includegraphics[width=2.2in]{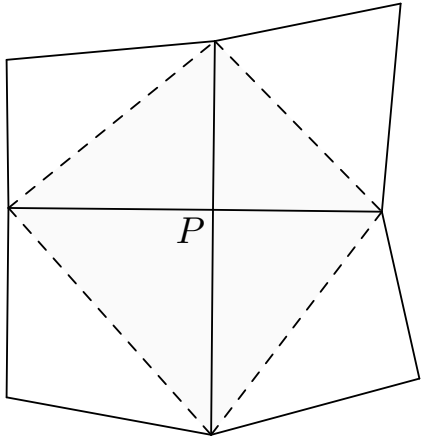}
}
\centering
\caption{New control volume for a node $P$.}
\label{Fig2}
\end{figure}
We still denote by $V_h$ the test function space associated with this new dual partition.
The definition of $V_h$ is similar to (\ref{eqf6}), except its support set.

Applying $V_h$ to test problem (\ref{eq:F1})-(\ref{eq:F2}), we obtain the corresponding weak formulation by Green's theorem:
\begin{equation}\label{eqF10}
(\partial_{t}u,v_h)+\sum_{K_{P}^{\ast}\in \mathcal{T}_{h}^{\ast}}v_h(P)\sum_{\sigma\in \partial K_{P}^{\ast} }\mathcal{F}_{P,\sigma}=\sum_{K_{P}^{\ast}\in \mathcal{T}_{h}^{\ast}}v_h(P)\int_{K_{P}^{\ast}} fdx~~ ~\forall~v_h\in V_h
\end{equation}
with
\begin{eqnarray}
\mathcal{F}_{P,\sigma}=-\int_{\sigma} \kappa \nabla u\cdot \mathbf{n}ds=-\int_{\sigma} \nabla u\cdot \kappa^{T}\mathbf{n}ds \label{eqF11}
\end{eqnarray}
where $P\in \Omega_{h}$ and $\mathbf{n}$ is the outward unit normal vector of $\sigma$.

We present a discrete flux $F_{P,\sigma}$ to approximate $\mathcal{F}_{P,\sigma}$. Unlike any FV method, we do not need to decompose the co-normal $\kappa^{T}\mathbf{n}$. We directly discretize $\nabla u$ by the iso-parametric bilinear element.

We take nodes $P_1, P_3$ and an edge $\sigma=P_2P_4$ (as shown in Figure \ref{Fig3}) to present the discretization of continuous fluxes $\mathcal{F}_{P_1,\sigma}, \mathcal{F}_{P_3,\sigma}$. In Figure \ref{Fig3}, $\mathbf{q}_{i}, i=1,2,$ denote the normal vectors with respect to the diagonals. Assume that these vectors have the same lengths as their corresponding diagonals. For a given $u_h\in U_h$, let $u_i$ be the value of $u_h $ at node $P_i, i=1,2,3,4.$
\begin{figure}[h]
\centerline
{
\includegraphics[width=2.3in]{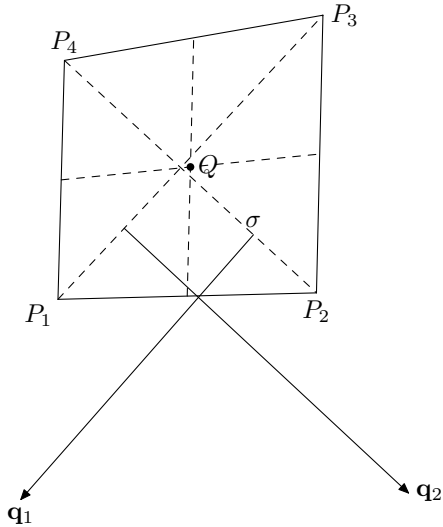}
}
\centering
\caption{Notations on a quadrilateral element.}
\label{Fig3}
\end{figure}

On an element as shown in Figure \ref{Fig3}, the trial function $u_h$ takes the following form
\begin{align}
u_h(x,y)=\widehat{u}_h(\xi,\eta)=u_1N_1(\xi,\eta)+u_2N_2(\xi,\eta)+u_3N_3(\xi,\eta) +u_4N_4(\xi,\eta)\nonumber
\end{align}
where
\begin{align}
&N_1(\xi,\eta)=(1-
\xi)(1-\eta),~~~~~ N_2(\xi,\eta)=\xi(1-\eta) \nonumber \\
&N_3(\xi,\eta)=\xi\eta,~~~~~~~~~~~~~~~~~~~~ N_4(\xi,\eta)=(1-\xi)\eta. \nonumber
\end{align}
By simple calculation, the gradient of $u_h$ at $Q$ is formulated as follows
\begin{align}\label{eqf14}
\nabla u_h\mid_{Q}=\frac{u_1-u_3}{2\mathcal{J}_{Q}}\mathbf{q}_{1}+\frac{u_2-u_4}{2\mathcal{J}_{Q}}\mathbf{q}_{2},
\end{align}
where $\mathcal{J}_{Q}$ is the determinant of the Jacobi matrix corresponding to $Q$.
\subsection{Construction of the monotone FVE scheme}
In this subsection, we describe the construction of the new monotone FVE scheme, which is a nonlinear two-point flux approximation.

Substitute $\nabla u_h\mid_{Q}$ given in (\ref{eqf14}) into (\ref{eqF11}), we obtain a one-sided flux $F_{P_1}$ with respect to node $P_1$
\begin{align}
F_{P_1}&=\nabla u_h\mid_{Q}\cdot \kappa_{Q}^{T}\mathbf{q}_{1}=A_{\sigma}(u_1-u_3)+r_{\sigma}(u_2,u_4),\label{eqF24}
\end{align}
where
\begin{align}
&A_{\sigma}=\frac{\mathbf{q}_{1}\cdot \kappa_{Q}^{T}\mathbf{q}_{1}}{2\mathcal{J}_{Q}}~~ \text{is the main part of the flux,}\nonumber\\
&r_{\sigma}(u_2,u_4)=\frac{\mathbf{q}_{2}\cdot \kappa_{Q}^{T}\mathbf{q}_{1}}{2\mathcal{J}_{Q}}(u_2-u_4)~~ \text{is the residual of the flux,} \nonumber
\end{align}
and $\kappa_{Q}^{T}=\kappa(Q)^{T}$. Firstly, we have the following estimation for $A_{\sigma}$
\begin{equation*}
0< A_{\sigma}\leq\frac{\lambda_{\text{max}}h_{K}^{2}}{Ch_{K}^{2}}=\frac{\lambda_{\text{max}}}{C},
\end{equation*}
where the positive definiteness of $\kappa_{Q}$ and the regularity (\ref{reg1}) have been used. Hence, the coefficient $A_{\sigma}$ is bounded. With respect to the node $P_3$, continuing to use $\nabla u_h\mid_{Q}$ to approximate $\nabla u$, we obtain a one-sided flux $F_{P_3}$
\begin{align}
F_{P_3}&=\nabla u_h\mid_{Q}\cdot \kappa_{Q}^{T}(-\mathbf{q}_{1})=A_{\sigma}(u_3-u_1)-r_{\sigma}(u_2,u_4).\label{eqF25}
\end{align}
Let
\begin{equation*}
r_{\sigma}^{+}=\frac{\mid r_{\sigma}(u_2,u_4)\mid+r_{\sigma}(u_2,u_4)}{2}, r_{\sigma}^{-}=\frac{\mid r_{\sigma}(u_2,u_4)\mid-r_{\sigma}(u_2,u_4)}{2}.
\end{equation*}
be the positive and negative parts of $r_{\sigma}(u_2,u_4)$, then the one-sided fluxes $F_{P_1}$ and $F_{P_3}$ can be further rewritten as
\begin{align}
F_{P_1}=A_{\sigma}(u_1-u_3)+r_{\sigma}^{+}-r_{\sigma}^{-}, \label{eqF26}\\
F_{P_3}=A_{\sigma}(u_3-u_1)+r_{\sigma}^{-}-r_{\sigma}^{+}. \label{eqFF26}
\end{align}

Assume that $u_h$ is positive at all vertexes and let $M, C$ be two positive constants independent of $u_h$. The value of $M$ will be discussed in next subsection and the value of $C$ is less than or equal to 1 in our numerical tests.

Based on $F_{P_1}, F_{P_3}$ given by (\ref{eqF26}) and (\ref{eqFF26}), we obtain the two-point flux approximations $F_{P_1,\sigma}, F_{P_3,\sigma}$ by the following nonlinear technique:
\begin{itemize}
  \item For the node $P_1$, we have
\begin{align}
F_{P_1,\sigma}=(A_{\sigma}+\frac{r_{\sigma}^{+}M}{Mu_1+Ch^2})u_1-(A_{\sigma}+\frac{r_{\sigma}^{-}M}{Mu_3+Ch^2})u_3. \label{eqF27}
\end{align}
  \item For the node $P_3$, we have
\begin{align}
F_{P_3,\sigma}=(A_{\sigma}+\frac{r_{\sigma}^{-}M}{Mu_3+Ch^2})u_3-(A_{\sigma}+\frac{r_{\sigma}^{+}M}{Mu_1+Ch^2})u_1. \label{eqF28}
\end{align}
\end{itemize}
Obviously, $F_{P_1,\sigma}, F_{P_3,\sigma}$ still satisfy the local conservation because $F_{P_1,\sigma}+F_{P_3,\sigma}=0$.

Specially, if $P_1$ is a boundary node and $\sigma\in \partial K_{P_1}^{\ast}$ is a boundary edge, then it holds that from the boundary condition (\ref{eq:F2})
\begin{align}
\mathcal{F}_{P_1,\sigma}=\int_{\sigma}(\frac{\delta}{\gamma}u-g)ds. \nonumber
\end{align}
We employ the following discretization $F_{P_1,\sigma}$ to approximate $\mathcal{F}_{P_1,\sigma}$
\begin{align}
F_{P_1,\sigma}=(\frac{\delta_{\sigma}}{\gamma_{\sigma}}u_{P_1}-g_{\sigma}) \mid \sigma \mid,  \nonumber
\end{align}
where $u_{P_1}$ is a unknown, $\mid \sigma \mid$ is the length of edge $\sigma$, and $\delta_{\sigma}, \gamma_{\sigma}$ and $g_{\sigma}$ are the approximations of $\delta, \gamma$ and $g$ on the edge $\sigma$ respectively. There are several ways to calculate $\delta_{\sigma}, \gamma_{\sigma}$ and $g_{\sigma}$, such as the midpoint value or area mean on the edge $\sigma$.

Finally, the monotone FVE scheme can be formulated as: Find $u_h(x,\cdot)\in U_h$, such that
\begin{equation}\label{eqF29}
(\partial_{t}u_h,v_h)+\sum_{K_{P}^{\ast}\in \mathcal{T}_{h}^{\ast}}v_h(P)\sum_{\sigma\in \partial K_{P}^{\ast} }F_{P,\sigma}=\sum_{K_{P}^{\ast}\in \mathcal{T}_{h}^{\ast}}v_h(P)\int_{K_{P}^{\ast}} fdx,~~~\forall~v_h\in V_h.
\end{equation}
Obviously, this monotone FVE scheme is nonlinear and there are five nonzero elements in each row. If the diffusion coefficient $\kappa$ is linear, the solution of this scheme is less time-consumming than linear schemes. Nevertheless, numerical tests show that, if the diffusion coefficient $\kappa$ is nonlinear, the monotone FVE scheme (\ref{eqF29}) needs fewer linear iterations per time step and reduce the costs by almost half compared with standard FVE method.

\begin{remark}
Although the monotone FVE scheme (\ref{eqF29}) is constructed for unsteady diffusion equations, it is likewise applicable for steady diffusion equations. For convenience, we take several steady diffusion problems to examine the monotonicity and convergence of this new scheme in our numerical tests.
\end{remark}
\begin{remark}
The monotone FVE scheme (\ref{eqF29}) is valid for the case that the diffusion coefficients are discontinuous, provided that the cell sides fit the interfaces.
\end{remark}

\subsection{Analysis of the truncation error}
In the nonlinear technique (\ref{eqF27})-(\ref{eqF28}), the two parameters $M$ and $C$ are introduced to ensure the scheme's regularity with respect to the unknowns. However, we will show that they do not break the second-order accuracy in numerical experiments. Next, we give the analysis of the truncation error for this new monotone FVE scheme.

Under the assumption that $u_h$ is positive at all vertexes, there exists a positive constant $M$ which is large enough such that
\begin{align}
Mu_h(P)\geq O(h),~\forall~P\in \Omega_h. \label{Adm}
\end{align}
Similar to the standard FVE method, under the assumptions (\ref{AS1})-(\ref{AS2}), the one-sided flux $(\ref{eqF24})$ satisfies
\begin{align}
\mid\mathcal{F}_{P_1,\sigma}-F_{P_1}\mid=O(h^2), \nonumber
\end{align}
which is a classical estimation of the finite element space $U_h$ \cite{Refff16}.
For the nonlinear two-point flux approximation $F_{P_1,\sigma}$ given by $(\ref{eqF27})$, we have
\begin{align}
\mid\mathcal{F}_{P_1,\sigma}-F_{P_1,\sigma}\mid&=\mid\mathcal{F}_{P_1,\sigma}-F_{P_1}+F_{P_1}-F_{P_1,\sigma}\mid \nonumber \\
&\leq O(h^2)+\frac{Cr_{\sigma}^{+}h^2}{Mu_1+Ch^2}+\frac{Cr_{\sigma}^{-}h^2}{Mu_3+Ch^2} \nonumber
\end{align}
Due to (\ref{Adm}) and $r_{\sigma}(u_2,u_4)=O(h)$, we further get that
\begin{align}
\mid\mathcal{F}_{P_1,\sigma}-F_{P_1,\sigma}\mid&\leq O(h^2)+r_{\sigma}^{+}O(h)+r_{\sigma}^{-}O(h) \nonumber \\
&= O(h^2). \nonumber
\end{align}
Therefore, the truncation error of the monotone FVE scheme (\ref{eqF29}) remains $O(h^2)$, which is a necessary condition for a scheme to guarantee the second-order accuracy.

As analysed in \cite{XBlanc}, the resulted flux is not consistent on non-rectangular meshes, if employing a difference quotient to approximate the gradient directly. Hence, usually a combination of two one-sided fluxes is employed to obtain a second-order monotone FV scheme. Nevertheless, the iso-parametric bilinear finite element space has a first-order approximate capability to the gradient. Then, the associated discrete flux presented in this paper still keeps a second-order accuracy even if the combination is omitted.
\subsection{Monotonicity and nonlinear iteration}
Let $\triangle t$ be the time size, $U^n=(U^{n}_{P})_{P\in \Omega_{h}}\in \mathrm{R}^{\mathcal{N}}$ be the vector of discrete unknowns at the $n-$th time level. Denote the area of dual element $K_{P}^{\ast}$ by $S_{K_{P}^{\ast}}$. Let $L$ be the diagonal matrix of the areas of dual elements.

Taking the basis functions of $V_h$ as test functions in (\ref{eqF29}) and using an implicit discretization in time, we obtain the full discretization for system (\ref{eq:F1})-(\ref{eq:F2}):
\begin{equation}\label{eqggF29}
L\frac{U^{n+1}-U^{n}}{\triangle t}+A(U^{n+1})U^{n+1}=f^{n+1}+g^{n+1}.
\end{equation}
Here, $A(U^{n+1})$ is a matrix corresponding to the spacial discretization, and $f^{n+1}=(f_{P}^{n+1})_{P\in \Omega_h}$ and $g^{n+1}=(g_{P}^{n+1})_{P\in\Omega_h}$ are two vectors corresponding to the source term and boundary condition. Finally, the equation (\ref{eqggF29}) can be further rewritten as
\begin{equation}\label{eqF30}
(L+\triangle tA(U^{n+1}))U^{n+1}=\triangle t(f^{n+1}+g^{n+1})+LU^{n}.
\end{equation}

For any vector $U\in \mathrm{R}^{\mathcal{N}}$ and $U\geq 0$, the matrix $A(U)$ has the following properties:
\begin{enumerate}
  \item All diagonal entries of matrix $A(U)$ are positive;
  \item All off-diagonal entries of $A(U)$ are non-positive;
  \item Column sum corresponding to the interior node is 0 and column sum corresponding to the boundary node is positive.
\end{enumerate}

There exist many nonlinear iteration methods to solve system (\ref{eqF30}). Since the efficiency of nonlinear iteration methods is not the focused issue in this paper, we just employ a simple nonlinear iteration, the relaxed version of the Picard iteration method, to solve the nonlinear system (\ref{eqF30}). This relaxed version demonstrates more robust behavior \cite{Wang4} and the nonlinear iterations are all convergent in our numerical tests. We choose a small value $\varepsilon_{non}>0$ and initial vector $\widehat{U}^{0}\geq0$, and repeat for $k=1,2,\cdots$,
\begin{enumerate}
  \item Solve $(L+\triangle t A(\widehat{U}^{k-1}))\widetilde{U}^{k}=\triangle t(f^{n+1}+g^{n+1})+LU^{n}$;
  \item $\widehat{U}^{k}=\widehat{U}^{k-1}+\omega(\widetilde{U}^{k}-\widehat{U}^{k-1})$, where $0<\omega\leq 1$ is the damping factor;
  \item Stop if $\parallel \widehat{U}^{k}-\widehat{U}^{k-1} \parallel \leq \varepsilon_{non} \parallel \widehat{U}^{0} \parallel$;
  \item The value of $U^{n+1}$ is then given by the last $\widehat{U}^{k}$.
\end{enumerate}

According to the definition of M-matrix \cite{XavierBlanc:apositivescheme}, it is noted that $A(U)^{T}$ is an M-matrix for any vector $U\geq 0$ and hence $(L+\triangle t A(U))^{T}$ is also an M-matrix. It holds from the properties of M-matrix \cite{XavierBlanc:apositivescheme} that the matrix $(L+\triangle t A(U))^{-T}$ has non-negative elements. Under the assumptions that $(f^{n+1}+g^{n+1})\geq 0, U^{0}\geq0$ and the linear systems in the above Picard iterations are exactly solved, it is readily evident that all iterative solutions are non-negative vectors.
\subsection{Existence of a solution for the nonlinear FVE scheme}
It is easy to observe that the monotone FVE scheme (\ref{eqF29}) is regular with respect to $u_h$ for any fixed positive constant $M$. Accordingly, the existence of a solution for nonlinear system (\ref{eqF30}) could be obtained by the Brouwer's theorem.

\begin{theorem}
If $f^{n+1}+g^{n+1}\geq0$ and $U^{n}\geq0$, then system (\ref{eqF30}) has a solution $U^{n+1}$ for any fixed positive constant $M.$
\end{theorem}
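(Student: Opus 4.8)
\emph{Proof plan.} The idea is a standard Brouwer fixed‑point argument applied to the Picard map of the scheme; the only care needed is to make this map well defined and continuous on all of $\mathbb{R}^{\mathcal{N}}$ and to obtain an a priori bound on its image. Since the nonlinear coefficients in $(\ref{eqF27})$--$(\ref{eqF28})$ enter only through the ratios $r_{\sigma}^{\pm}M/(Mu_i+Ch^2)$, I would first replace each nodal value $u_i$ occurring in a denominator by its positive part $u_i^{+}=\max\{u_i,0\}$, obtaining a matrix $\widetilde A(U)$ defined for every $U\in\mathbb{R}^{\mathcal{N}}$. Because $Mu_i^{+}+Ch^2\ge Ch^2>0$ there is no singularity, and $U\mapsto\widetilde A(U)$ is continuous (the maps $r_{\sigma}^{\pm}$ and $u_i^{+}$ are continuous); moreover $\widetilde A(U)=A(U)$ whenever $U\ge 0$. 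As observed in Section 3.3, for $U\ge 0$ the matrix $A(U)^{T}$, hence $(L+\triangle t A(U))^{T}$, is an M-matrix; the same reasoning applies verbatim to $\widetilde A$, so $(L+\triangle t\,\widetilde A(U))$ is nonsingular for every $U$ and its inverse has nonnegative entries. I can therefore define the continuous map
\[
\Phi(U)=\bigl(L+\triangle t\,\widetilde A(U)\bigr)^{-1}\bigl(\triangle t(f^{n+1}+g^{n+1})+LU^{n}\bigr),\qquad U\in\mathbb{R}^{\mathcal{N}},
\]
and note that any fixed point $U^{*}$ of $\Phi$ with $U^{*}\ge 0$ satisfies $(L+\triangle t A(U^{*}))U^{*}=\triangle t(f^{n+1}+g^{n+1})+LU^{n}$, i.e. it solves $(\ref{eqF30})$ with $U^{n+1}=U^{*}$.

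Next I would show that $\Phi$ maps a suitable ball into itself. Set $b=\triangle t(f^{n+1}+g^{n+1})+LU^{n}$; by hypothesis $f^{n+1}+g^{n+1}\ge 0$, $U^{n}\ge 0$, and since $L$ is diagonal with positive entries $S_{K_P^{*}}$ we get $b\ge 0$, whence $\Phi(U)=(L+\triangle t\,\widetilde A(U))^{-1}b\ge 0$ for every $U$. For the a priori bound, write $W=\Phi(U)$ and multiply the identity $(L+\triangle t\,\widetilde A(U))W=b$ on the left by the all‑ones row vector $\mathbf{1}^{T}$: the $j$-th column sum of $L+\triangle t\,\widetilde A(U)$ equals $S_{K_{P_j}^{*}}$ plus $\triangle t$ times the $j$-th column sum of $\widetilde A(U)$, which by the third property of $A(U)$ listed above is $\ge S_{K_{P_j}^{*}}\ge c_{0}:=\min_{P}S_{K_P^{*}}>0$, the positive lower bound coming from the mesh regularity $(\ref{reg1})$. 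Since $W\ge 0$ this yields $c_{0}\|W\|_{1}\le \mathbf{1}^{T}b$, that is $\|\Phi(U)\|_{1}\le R_{0}:=\mathbf{1}^{T}b/c_{0}$, a constant independent of $U$ (it depends only on the data, the mesh and $\triangle t$).

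Finally, $\Phi$ is a continuous self‑map of the nonempty, convex, compact set $\{U\in\mathbb{R}^{\mathcal{N}}:\|U\|_{1}\le R_{0}\}$, so Brouwer's theorem provides a fixed point $U^{*}$. Since $\Phi(U^{*})\ge 0$ we have $U^{*}\ge 0$, hence $\widetilde A(U^{*})=A(U^{*})$, and $U^{n+1}:=U^{*}$ solves $(\ref{eqF30})$. I expect the only real obstacle to be the bookkeeping that makes the Picard map globally well defined and continuous: the nonlinear coefficients are naturally defined only for nonnegative vertex values, and it is the truncation by the positive part, together with the fact that the $Ch^2$ term keeps the denominators bounded away from zero, that repairs this; once that is in place, the M-matrix structure and the column‑sum property from Section 3.3 deliver both the nonnegativity of $\Phi(U)$ and the uniform bound essentially for free. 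Note that the value of $M$ is irrelevant to this argument — it is used only in the truncation‑error analysis of Section 3.2 — which is why the conclusion holds for every fixed positive constant $M$.
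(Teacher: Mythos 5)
Your proposal is correct and follows essentially the same route as the paper: a Brouwer fixed-point argument for the Picard map $\phi(w)=(L+\triangle tA(w))^{-1}(\triangle t(f^{n+1}+g^{n+1})+LU^{n})$, with nonnegativity of the image obtained from the M-matrix structure and the a priori bound obtained by left-multiplying by the all-ones vector and using the column-sum property. The only cosmetic difference is that you extend the map to all of $\mathbb{R}^{\mathcal{N}}$ via positive-part truncation and work on an $\ell^{1}$-ball, whereas the paper restricts the domain from the outset to the convex compact set $\{w\geq 0,\ \sum_{P}S_{K_{P}^{\ast}}w_{P}\leq C_{0}\}$, which renders the truncation unnecessary.
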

\begin{proof}
Let $C_0$ be a non-negative constant. Define a compact set
\begin{equation*}
\mathcal{C}=\bigg\{ w=(w_P)_{P\in \Omega_h} \in \mathrm{R}^{\mathcal{N}}: w\geq0, \sum_{P\in \Omega_h} S_{K_{P}^{\ast}} w_P\leq C_0 \bigg\}
\end{equation*}
and a mapping $\phi : \mathcal{C} \mapsto \mathrm{R}^{\mathcal{N}}$ such that
\begin{equation}
\phi(w)=(L+\triangle tA(w))^{-1}(\triangle t(f^{n+1}+g^{n+1})+LU^{n}).\label{eqgF30}
\end{equation}
We need to prove that $\phi$ has a fixed point in order to prove that equation (\ref{eqF30}) has a solution.

In view of the discussion in the above subsection, we prove that the matrix $(L+\triangle tA(w))^{-1}$ has non-negative elements. Since $\triangle t(f^{n+1}+g^{n+1})+LU^{n}\geq 0$, we have
\begin{equation*}
\forall~w\in \mathcal{C}, \phi(w) \geq 0.
\end{equation*}
Rewrite (\ref{eqgF30}) as
\begin{equation}
(L+\triangle tA(w))\phi(w)=(\triangle t(f^{n+1}+g^{n+1})+LU^{n}).\label{epqgF30}
\end{equation}
Multiplying (\ref{epqgF30}) by the constant vector $(1,\ldots,1)$ on the left, we deduce that
\begin{equation}
\sum_{P\in \Omega_h} S_{K_{P}^{\ast}}[\phi(w)]_P\leq \sum_{P\in \Omega_h}(\triangle t(f_{P}^{n+1}+g_{P}^{n+1})+S_{K_{P}^{\ast}}U^{n}_P).\label{efpqgF30}
\end{equation}
where we have used the facts that $\phi(w) \geq 0$ and all column sums in $A(w)$ are no-negative.

The righthand side of (\ref{efpqgF30}) is a non-negative constant. We take $C_0=\sum_{P\in \Omega_h}(\triangle t(f_{P}^{n+1}+g_{P}^{n+1})+S_{K_{P}^{\ast}}U^{n}_P)$.
Then, $\phi$ maps $\mathcal{C}$ into itself.

The set $\mathcal{C}$ is a convex compact subset of $\mathrm{R}^{\mathcal{N}}$. Since each coefficient of $A(w)$ is a continuous function of $w$ and $A(w)\mapsto (L+\triangle tA(w))^{-1}$ is continuous from the set M-matrices to the set of matrices, $\phi$ is continuous. Hence, we may apply Brouwer's theorem, which implies that $\phi$ has a fixed point in $\mathcal{C}$, i.e., the system (\ref{eqF30}) has a solution. The theorem 3.1 is proved.
\end{proof}
\section{Numerical experiments}
In this section, we examine numerical performance of the monotone FVE method proposed in this paper. The monotonicity and convergence study of the discrete solution are presented for steady diffusion problems on both uniform and random distorted quadrilateral meshes. Moreover, to illustrate the efficiency of our scheme, we apply it to solve an equilibrium radiation diffusion equation. In all examples, we compare this new monotone FVE scheme with the standard FVE method\cite{ShuShiHuangYunqing,Refff16,Shu16}.

In the nonlinear iterations, the prescribed tolerance $\varepsilon_{non}$ is $10^{-7}$, and we use the GMRES method\cite{GMRES} to solve the linear systems. In our numerical experiments, we use the following discrete $L_{2}$-norm and discrete $H^1$-semi-norm to evaluate the approximation errors:
\begin{align*}
\parallel u-u_h\parallel_{0,h}&=\big(\sum_{K^{\ast}_{P}\in \mathcal{T}_{h}^{\ast}}  S_{K^{\ast}_{P}}\mid u(P)-u_h(P)\mid^2\big)^{1/2},\\
\mid u-u_h\mid_{1,h}&=\big(\sum_{K \in \mathcal{T}}S_{K}\mid \nabla u(Q)-\nabla u_h(Q)\mid^2\big)^{1/2},
\end{align*}
where $S_{K^{\ast}_{P}}$ and $S_{K}$ are the areas of $K^{\ast}_{P}$ and $K$, respectively. The rate of convergence is obtained by the following formula:
\begin{eqnarray*}
Rate=\frac{\log[E(h_2)/E(h_1)]}{\log(h_2/h_1)},
\end{eqnarray*}
where $h_1, h_2$ denote the mesh sizes of two successive meshes, and $E(h_1), E(h_2)$ are the corresponding errors.
\begin{figure}[H]
\centering
\subfigure[Uniform mesh]{
\label{Figa}
\includegraphics[width=0.4\textwidth]{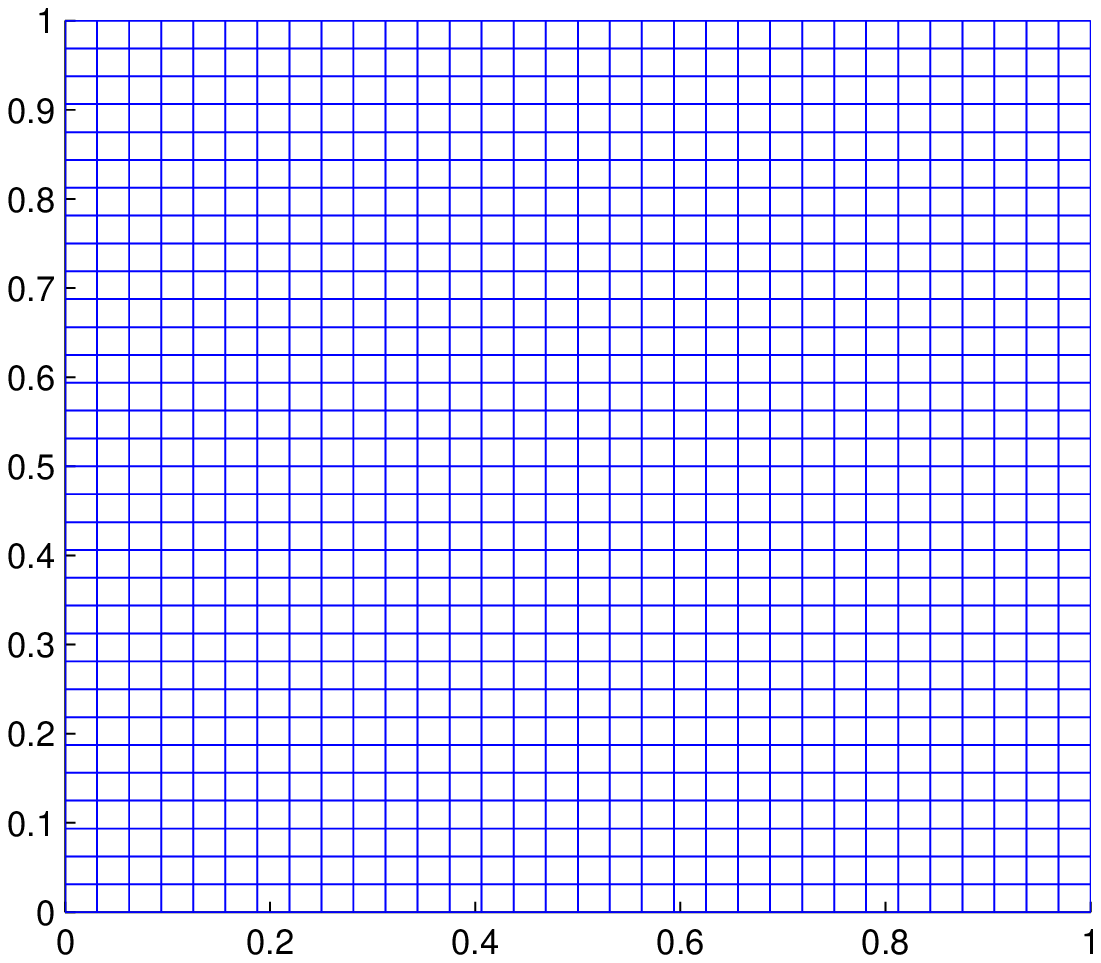}}
\subfigure[Random quadrilateral mesh]{
\label{Figb}
\includegraphics[width=0.4\textwidth]{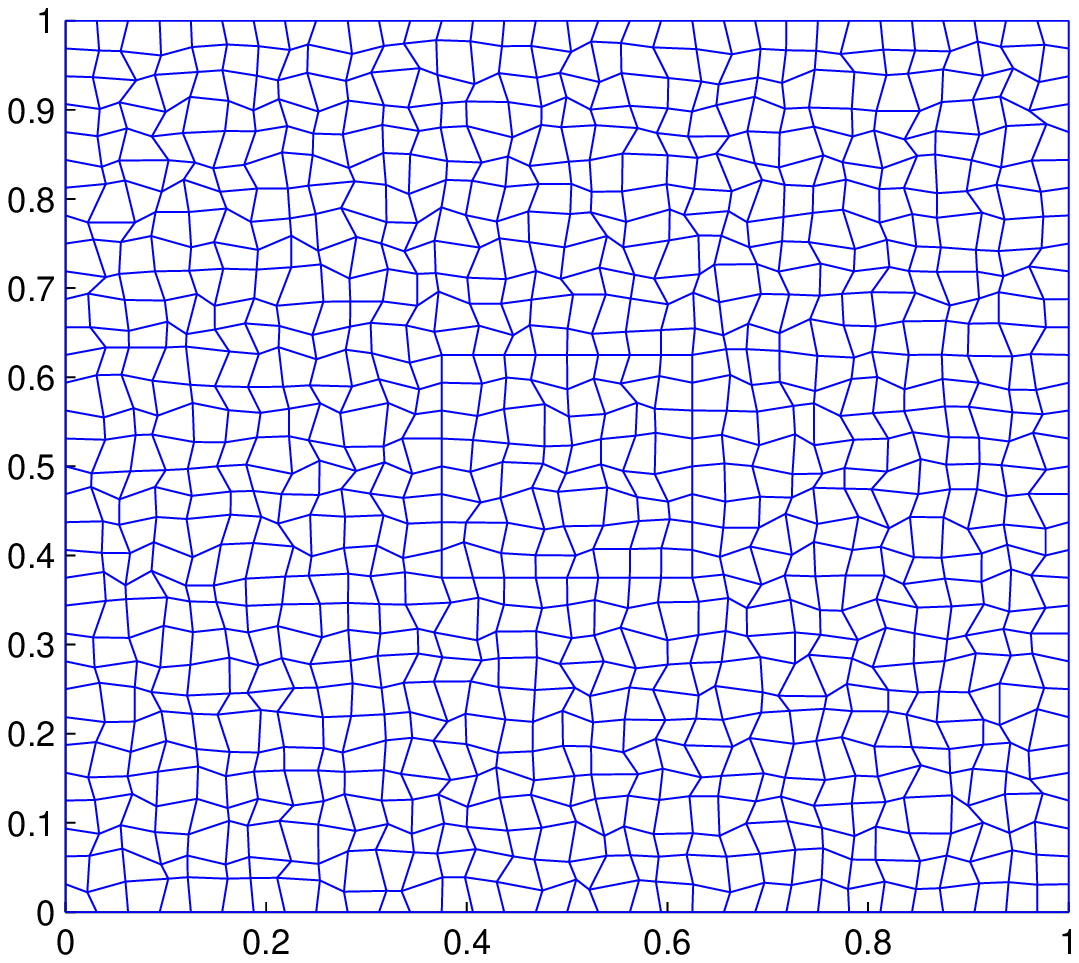}}
\caption{Two types of meshes for the monotonicity test.}
\label{Fig4}
\end{figure}

The numerical tests are performed on uniform meshes and random distorted quadrilateral meshes, as shown in Figure \ref{Fig4}. The distorted meshes are constructed from the uniform meshes by adding a random perturbation to the interior nodes. Unlike the convergence study for the standard FVE methods in \cite{Ref16}, the distortion in this paper is performed on each refinement level.

\subsection{Monotonicity}
The computational domain is a bi-unit square. The forcing function is taken as
\begin{equation*}
f=\left\{
    \begin{array}{ll}
      1, & \hbox{if}~~(x,y)\in [3/8,5/8]^2, \\
      0, & \hbox{otherwise.}
    \end{array}
  \right.
\end{equation*}
The diffusion tensor is given by
\begin{equation*}
\kappa=\left(
         \begin{array}{cc}
           y^2+\alpha x^2 &-(1-\alpha)xy  \\
           -(1-\alpha)xy & \alpha y^2+x^2\\
         \end{array}
       \right)
\end{equation*}
Consider a steady problem (\ref{eq:F1})-(\ref{eq:F2}) with boundary condition $10^{-9}(\kappa \nabla u)\cdot \mathbf{n}+u=0.$
In this paper, we take $\alpha=0.01$.

We test the proposed monotone FVE scheme on uniform meshes and random quadrilateral meshes (see Figure \ref{Fig4}). The exact solution $u(x,y)$ is unknown; nevertheless, the maximum principle indicates that it is non-negative. The numerical solutions obtained by the standard FVE scheme are shown in Figure \ref{Fig5}, and the counterparts for the monotone FVE scheme are shown in Figures \ref{Fig6}. In these figures, we observe that the standard FVE scheme produces negative values; however, the monotone FVE scheme preserves the positivity of the continuous solution.
\begin{figure}[H]
\centering
\subfigure[$u_{min}=-1.158409e-6,u_{max}=0.135485$]{
\label{Figggaa}
\includegraphics[width=0.49\textwidth]{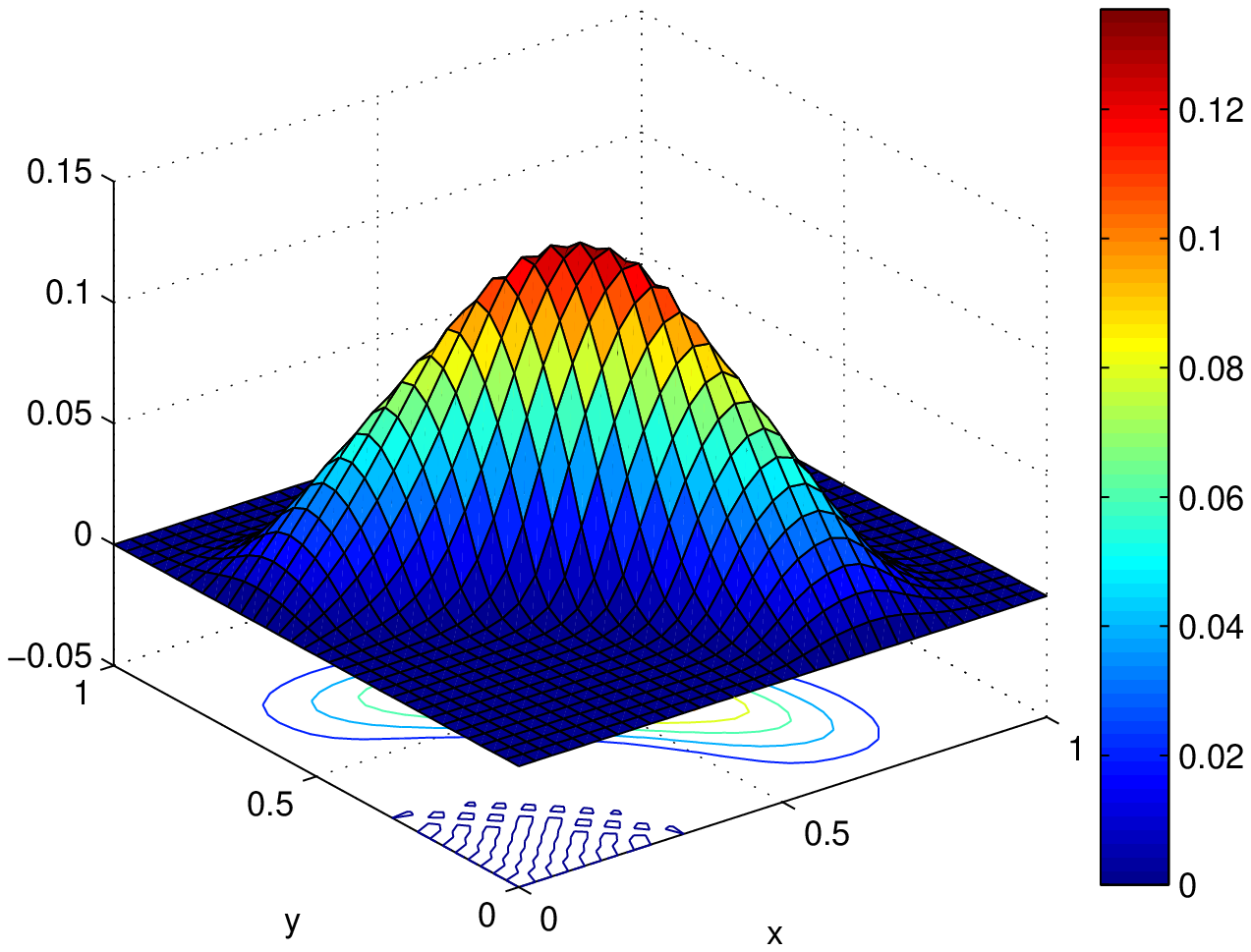}}
\subfigure[$u_{min}=-3.9916e-3,u_{max}=0.136371$]{
\label{Figggbb}
\includegraphics[width=0.49\textwidth]{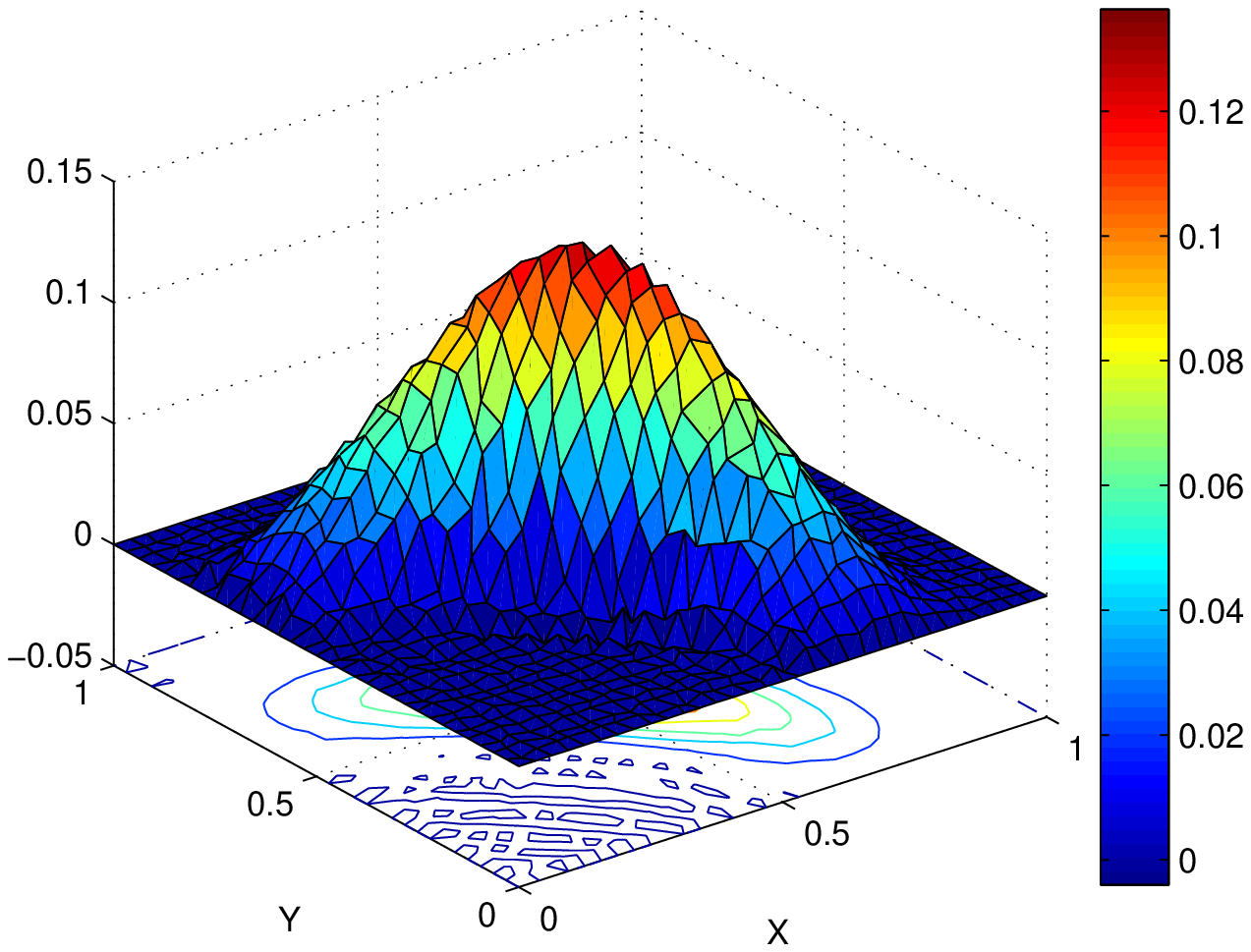}}
\caption{Solution profile on uniform (left) and random quadrilateral meshes (right) for standard FVE method.}
\label{Fig5}
\end{figure}

\begin{figure}[H]
\centering
\subfigure[$u_{min}=2.02680e-13,u_{max}=0.13586$]{
\label{Figgga}
\includegraphics[width=0.49\textwidth]{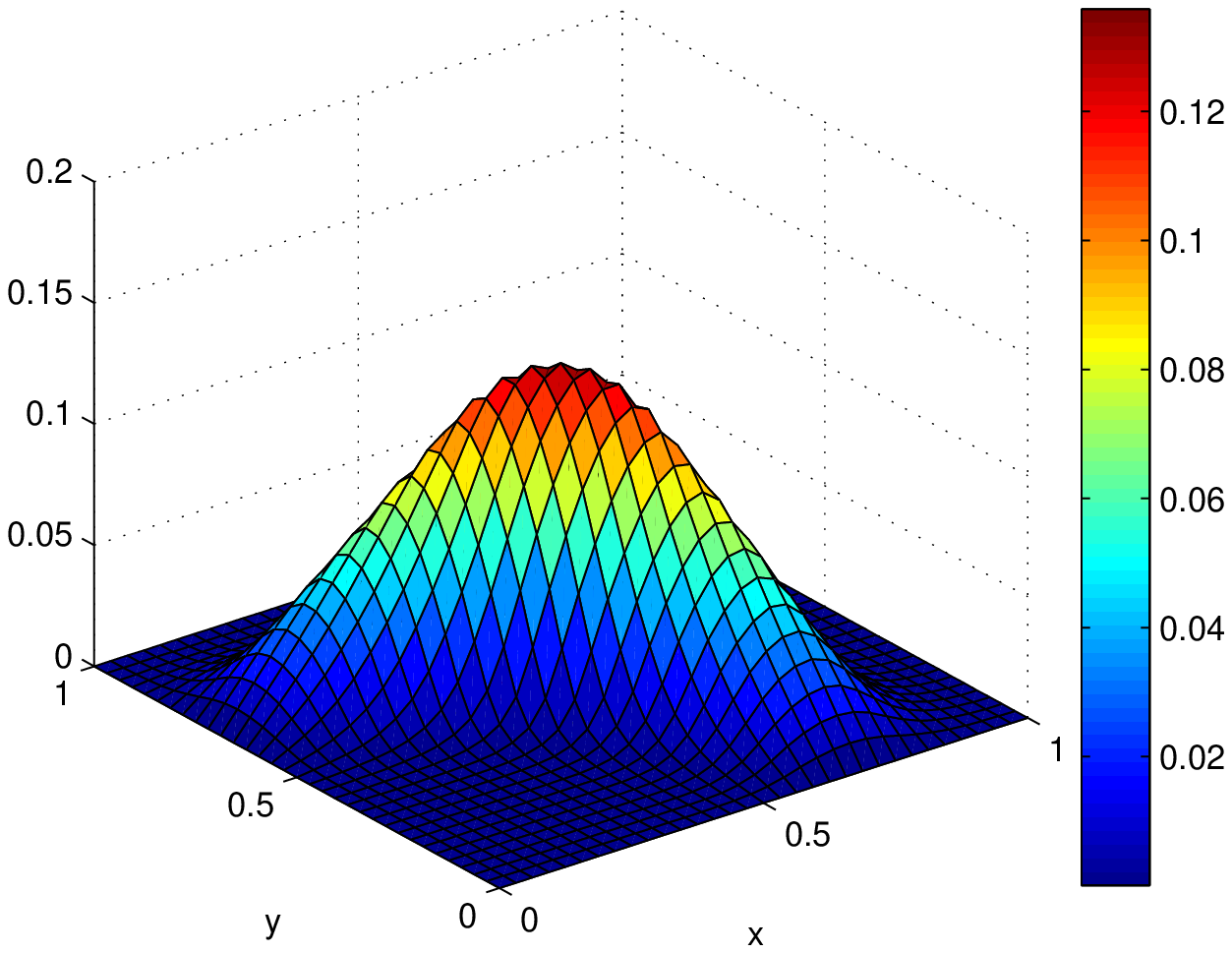}}
\subfigure[$u_{min}=2.02681e-13,u_{max}=0.13734$]{
\label{Figggb}
\includegraphics[width=0.49\textwidth]{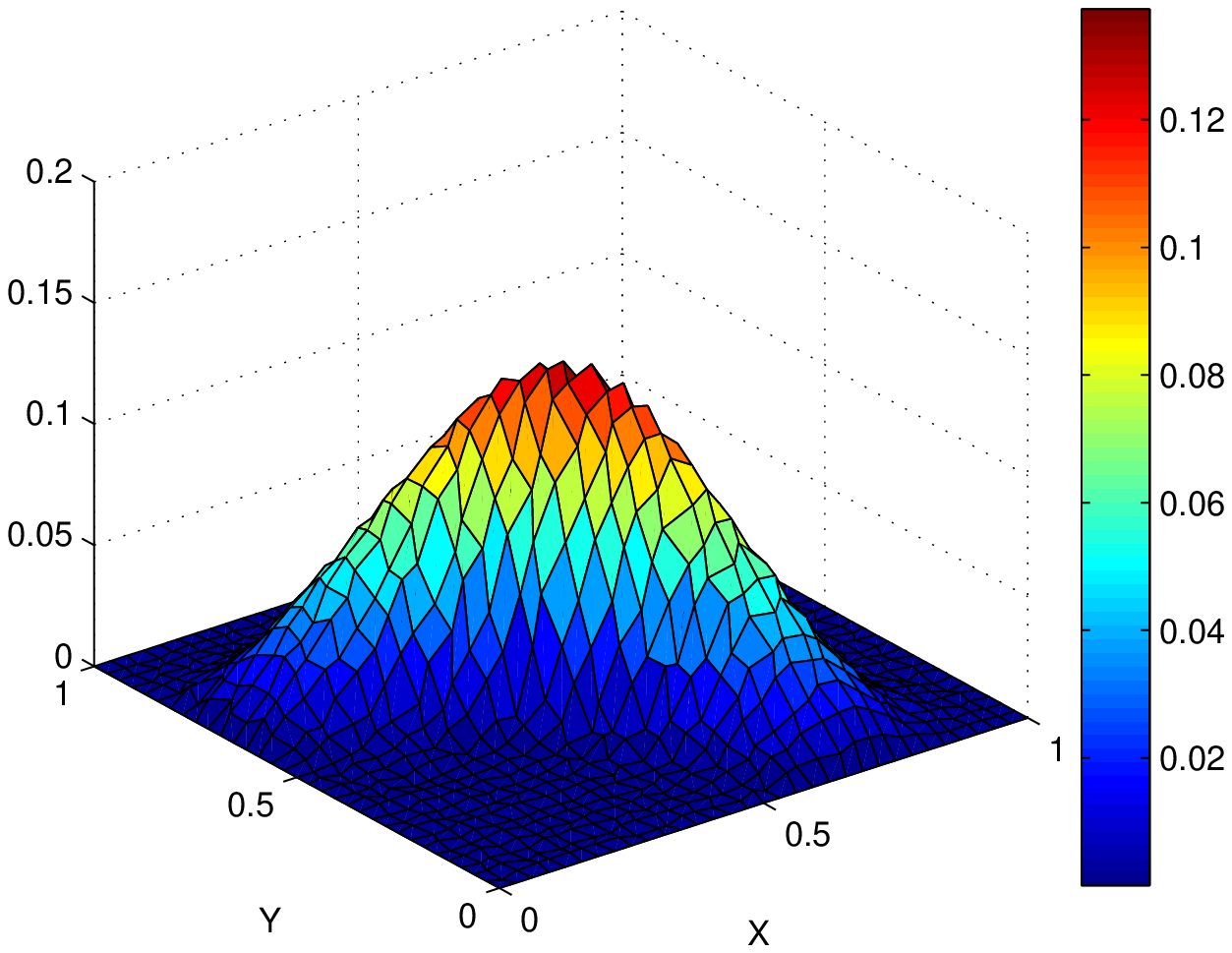}}
\caption{Solution profile on uniform (left) and random quadrilateral meshes (right) for monotone FVE sheme.}
\label{Fig6}
\end{figure}
\subsection{Convergence}
$\mathbf{Example~1}$: Convergence study for the isotropic diffusion problem. Let $\Omega=(0,1)^2$, the exact solution and diffusion coefficient be as follows:
\begin{equation*}
u(x,y)=\sin(\frac{\pi x}{2})\sin(\frac{\pi y}{2})+\frac{\pi}{2}, ~~~\kappa=\text{exp}(-x-y).
\end{equation*}

The source term $f$ and the boundary data $(\kappa \nabla u)\cdot \mathbf{n}+u=g$ are set accordingly to the exact solution. Since the positivity of the discrete unknowns is a requirement for the proposed monotone FVE scheme, the exact solution, and $f+g$ are non-negative in this example. Numerical errors and convergence rates for standard FVE method and monotone FVE scheme are presented in Table \ref{tab:1} and Table \ref{tab:2}, respectively.

\begin{table}
\caption{Standard FVE method: Errors and convergence rates for $\mathbf{Example 1}$.}
\label{tab:1}       
\begin{tabular}{lllllllll}
\hline\noalign{\smallskip}
     &\multicolumn{4}{c}{Uniform meshes}&\multicolumn{4}{c}{Random distorted meshes}\\
\cline{2-5}\cline{6-9}
Mesh level&$\parallel u-u_h\parallel_{0,h}$&Order&$\mid u-u_h\mid_{1,h}$&Order&$\parallel u-u_h\parallel_{0,h}$&Order&$\mid u-u_h\mid_{1,h}$&Order\\
\noalign{\smallskip}\hline\noalign{\smallskip}
1&$4.3080\times10^{-3}$&$-$&$1.2970\times10^{-2}$&$-$&$4.1090\times10^{-3}$&$-$&$1.7767\times10^{-2}$&$-$   \\
2&$1.0642\times10^{-3}$&2.0172&$4.0432\times10^{-3}$&1.6816&$1.3476\times10^{-3}$&1.6985&$8.0991\times10^{-3}$&1.1968   \\
3&$2.6630\times10^{-4}$&1.9986&$1.2056\times10^{-3}$&1.7457&$3.1712\times10^{-4}$&2.1088&$3.7088\times10^{-3}$&1.1384   \\
4&$6.6784\times10^{-5}$&1.9954&$3.4669\times10^{-4}$&1.7980&$1.0000\times10^{-4}$&1.6895&$1.8126\times10^{-3}$&1.0480   \\
5&$1.6738\times10^{-5}$&1.9963&$9.7073\times10^{-5}$&1.8365&$2.7870\times10^{-5}$&1.8788&$9.2189\times10^{-4}$&0.9942   \\
\noalign{\smallskip}\hline
\end{tabular}
\end{table}
\begin{table}
\caption{Monotone FVE scheme: Errors and convergence rates for $\mathbf{Example 1}$.}
\label{tab:2}       
\begin{tabular}{lllllllll}
\hline\noalign{\smallskip}
     &\multicolumn{4}{c}{Uniform meshes}&\multicolumn{4}{c}{Random distorted meshes}\\
\cline{2-5}\cline{6-9}
 Mesh level&$\parallel u-u_h\parallel_{0,h}$&Order&$\mid u-u_h\mid_{1,h}$&Order&$\parallel u-u_h\parallel_{0,h}$&Order&$\mid u-u_h\mid_{1,h}$&Order\\
\noalign{\smallskip}\hline\noalign{\smallskip}
1&$3.1348\times10^{-3}$&$-$&$1.4965\times10^{-2}$&$-$&$4.8452\times10^{-3} $&$-$&$2.3929\times10^{-2}$&$-$   \\
2&$8.0581\times10^{-4}$&1.9598&$4.7442\times10^{-3}$&1.6573&$1.5094\times10^{-3}$&1.7768&$1.0179\times10^{-2}$&1.3022\\
3&$2.0610\times10^{-4}$&1.9670&$1.4366\times10^{-3}$&1.7235&$3.7475\times10^{-4}$&2.0307&$4.9751\times10^{-3}$&1.0434\\
4&$5.1667\times10^{-5}$&1.9960&$4.1797\times10^{-4}$&1.7811&$1.1363\times10^{-4}$&1.7469&$2.4630\times10^{-3}$&1.0292\\
5&$1.2361\times10^{-5}$&2.0634&$1.1804\times10^{-4}$&1.8241&$3.0842\times10^{-5}$&1.9177&$1.2319\times10^{-3}$&1.0188\\
\noalign{\smallskip}\hline
\end{tabular}
\end{table}
$\mathbf{Example~2}$: Convergence study for the anisotropic diffusion problem. Let $\Omega=(0,1)^2$, the exact solution and diffusion coefficient be as follows:
\begin{equation*}
u(x,y)=\sin(\pi x)\sin(\pi y)+0.0041\pi,
\end{equation*}
\begin{equation*}
\kappa=\left(
         \begin{array}{cc}
          \cos\theta & \sin\theta \\
           -\sin\theta & \cos\theta \\
         \end{array}
       \right)\left(
                \begin{array}{cc}
                  k_1 & 0 \\
                  0 & k_2 \\
                \end{array}
              \right)\left(
                       \begin{array}{cc}
           \cos\theta & -\sin\theta \\
           \sin\theta & \cos\theta \\
                        \end{array}
                     \right)
,
\end{equation*}
where $k_1=0.1, k_2=4, \theta=\pi/6$. The source term $f$ and the boundary data $g=(\kappa \nabla u)\cdot \mathbf{n}+1000u$ are set accordingly to the exact solution. Likewise, the exact solution, and $f+g$ are non-negative in this example. Numerical errors and convergence rates for standard FVE method and monotone FVE scheme are presented in Table \ref{tab:3} and Table \ref{tab:4}, respectively.
\begin{table}
\caption{Standard FVE Method: Errors and convergence rates for $\mathbf{Example 2}$.}
\label{tab:3}       
\begin{tabular}{lllllllll}
\hline\noalign{\smallskip}
     &\multicolumn{4}{c}{Uniform meshes}&\multicolumn{4}{c}{Random distorted meshes}\\
\cline{2-5}\cline{6-9}
Mesh level&$\parallel u-u_h\parallel_{0,h}$&Order&$\mid u-u_h\mid_{1,h}$&Order&$\parallel u-u_h\parallel_{0,h}$&Order&$\mid u-u_h\mid_{1,h}$&Order\\
\noalign{\smallskip}\hline\noalign{\smallskip}
1&$5.6362\times10^{-3}$&$-$&$1.8126\times10^{-2}$&$-$&$6.8585\times10^{-3}$&$-$&$7.2047\times10^{-2}$&$-$  \\
2&$1.3960\times10^{-3}$&2.0134&$4.5117\times10^{-3}$&2.0063&$1.8195\times10^{-3}$&2.0216&$3.3136\times10^{-2}$&1.1833\\
3&$3.4820\times10^{-4}$&2.0033&$1.1266\times10^{-3}$&2.0016&$4.5899\times10^{-4}$&2.0075&$1.5466\times10^{-2}$&1.1106\\
4&$8.7001\times10^{-5}$&2.0008&$2.8159\times10^{-4}$&2.0003&$1.1934\times10^{-4}$&1.9719&$7.7407\times10^{-3}$&1.0132\\
5&$2.1747\times10^{-5}$&2.0002&$7.0393\times10^{-5}$&2.0009&$3.0536\times10^{-5}$&2.0045&$3.8458\times10^{-3}$&1.0287\\
\noalign{\smallskip}\hline
\end{tabular}
\end{table}
\begin{table}
\caption{Monotone FVE scheme: Errors and convergence rates for $\mathbf{Example 2}$.}
\label{tab:4}       
\begin{tabular}{lllllllll}
\hline\noalign{\smallskip}
     &\multicolumn{4}{c}{Uniform meshes}&\multicolumn{4}{c}{Random distorted meshes}\\
\cline{2-5}\cline{6-9}
Mesh level&$\parallel u-u_h\parallel_{0,h}$&Order&$\mid u-u_h\mid_{1,h}$&Order&$\parallel u-u_h\parallel_{0,h}$&Order&$\mid u-u_h\mid_{1,h}$&Order\\
\noalign{\smallskip}\hline\noalign{\smallskip}
1&$1.8732\times10^{-2}$&$-$&$2.1715\times10^{-2}$&$-$&$1.0505\times10^{-2}$&$-$&$1.3502\times10^{-1}$&$-$  \\
2&$4.6227\times10^{-3}$&2.0186&$5.4135\times10^{-3}$&2.0040&$2.8031\times10^{-3}$&2.0127&$6.5878\times10^{-2}$&1.0933\\
3&$1.1518\times10^{-3}$&2.0048&$1.3700\times10^{-3}$&1.9823&$8.2891\times10^{-4}$&1.7759&$3.7008\times10^{-2}$&0.8405\\
4&$2.8767\times10^{-4}$&2.0014&$3.6002\times10^{-4}$&1.9280&$2.4248\times10^{-4}$&1.7994&$1.9862\times10^{-2}$&0.9110\\
5&$7.1895\times10^{-5}$&2.0004&$1.0489\times10^{-4}$&1.7791&$7.1023\times10^{-5}$&1.8057&$1.0479\times10^{-2}$&0.9403\\
\noalign{\smallskip}\hline
\end{tabular}
\end{table}

$\mathbf{Example~3}$: Convergence study for the diffusion problem with a discontinuous coefficient. Let $\Omega=(0,1)^2$ and the exact solution and diffusion coefficient be as follows:
\begin{equation*}
u(x,y)=\left\{
         \begin{array}{ll}
           \sin(\frac{\pi x}{2})\sin(\pi y)+2\sqrt{3}\pi \times 10^{-4}, & \hbox{if}~~ x\leq 1/3, \\
           \frac{\sqrt{3}}{3}\sin(\pi x)\sin(\pi y)+2\sqrt{3}\pi \times 10^{-4},& \hbox{if}~~ x> 1/3.
         \end{array}
       \right.
\end{equation*}
\begin{equation*}
\kappa=\left(
         \begin{array}{cc}
          4 & 0 \\
          0 & 1 \\
         \end{array}
       \right)
,~\hbox{if}~~ x\leq 1/3;~~ \kappa=\left(
         \begin{array}{cc}
          6 & 0 \\
          0 & 1 \\
         \end{array}
       \right)
,~\hbox{if}~~ x> 1/3.
\end{equation*}
The source term $f$ and the boundary data $g=(\kappa \nabla u)\cdot \mathbf{n}+10000u$ are set accordingly to the exact solution. We have examined that the exact solution, and $f+g$ are non-negative. Numerical errors and convergence rates for standard FVE method and monotone FVE scheme are presented in Table \ref{tab:5} and Table \ref{tab:6}, respectively.
\begin{table}
\caption{Standard FVE method: Errors and convergence rates for $\mathbf{Example 3}$.}
\label{tab:5}       
\begin{tabular}{lllllllll}
\hline\noalign{\smallskip}
     &\multicolumn{4}{c}{Uniform meshes}&\multicolumn{4}{c}{Random distorted meshes}\\
\cline{2-5}\cline{6-9}
Mesh level&$\parallel u-u_h\parallel_{0,h}$&Order&$\mid u-u_h\mid_{1,h}$&Order&$\parallel u-u_h\parallel_{0,h}$&Order&$\mid u-u_h\mid_{1,h}$&Order\\
\noalign{\smallskip}\hline\noalign{\smallskip}
1&$3.1045\times10^{-3}$&$-$&$4.0033\times10^{-3}$&$-$&$3.9770\times10^{-3}$&$-$&$2.5651\times10^{-2}$&$-$\\
2&$7.7297\times10^{-4}$&2.0058&$9.9901\times10^{-4}$&2.0026&$1.2873\times10^{-3}$&1.6441&$1.6449\times10^{-2}$&0.6293\\
3&$1.9304\times10^{-4}$&2.0015&$2.4971\times10^{-4}$&2.0002&$3.3317\times10^{-4}$&1.9787&$8.1944\times10^{-3}$&1.0200\\
4&$4.8248\times10^{-5}$&2.0003&$6.2500\times10^{-5}$&1.9983&$8.8884\times10^{-5}$&1.9431&$4.0906\times10^{-3}$&1.0217\\
5&$1.2063\times10^{-5}$&1.9998&$1.5841\times10^{-5}$&1.9801&$2.2002\times10^{-5}$&2.0020&$2.1065\times10^{-3}$&0.9759\\
\noalign{\smallskip}\hline
\end{tabular}
\end{table}
\begin{table}
\caption{Monotone FVE scheme: Errors and convergence rates for $\mathbf{Example 3}$.}
\label{tab:6}       
\begin{tabular}{lllllllll}
\hline\noalign{\smallskip}
     &\multicolumn{4}{c}{Uniform meshes}&\multicolumn{4}{c}{Random distorted meshes}\\
\cline{2-5}\cline{6-9}
Mesh level&$\parallel u-u_h\parallel_{0,h}$&Order&$\mid u-u_h\mid_{1,h}$&Order&$\parallel u-u_h\parallel_{0,h}$&Order&$\mid u-u_h\mid_{1,h}$&Order\\
\noalign{\smallskip}\hline\noalign{\smallskip}
1&$2.7071\times10^{-3}$&$-$&$3.0775\times10^{-3}$&$-$&$4.2274\times10^{-3}$&$-$&$3.2208\times10^{-2}$&$-$\\
2&$6.7108\times10^{-4}$&2.0121&$1.2207\times10^{-3}$&1.3340&$1.4420\times10^{-3}$&1.6386&$2.1269\times10^{-2}$&0.6322\\
3&$1.6554\times10^{-4}$&2.0193&$3.8384\times10^{-4}$&1.6691&$3.9136\times10^{-4}$&1.9009&$1.0640\times10^{-2}$&1.0095\\
4&$4.2234\times10^{-5}$&1.9707&$1.1397\times10^{-4}$&1.7518&$9.8116\times10^{-5}$&2.0253&$5.1350\times10^{-3}$&1.0665\\
5&$1.0510\times10^{-5}$&2.0066&$2.9818\times10^{-5}$&1.9343&$2.5751\times10^{-5}$&1.9672&$2.6463\times10^{-3}$&0.9749\\
\noalign{\smallskip}\hline
\end{tabular}
\end{table}
From the numerical results presented in Tables \ref{tab:1}-\ref{tab:6}, it is evident that the proposed monotone FVE scheme can achieve nearly the same convergence rates as the standard FVE method in both the discrete $L_{2}$-norm and discrete $H^1$-semi-norm, and the numerical errors are comparable for these two methods. On both uniform meshes and random distorted quadrilateral meshes, the proposed method have an approximate second-order convergence rate in the discrete $L_{2}$-norm and a higher than first-order convergence rate in the discrete $H^1$-semi-norm. Moreover, super-convergence in the discrete $H^1$-semi-norm can be still observed on uniform meshes.
\subsection{Equilibrium radiation diffusion problem}
\begin{figure}[H]
\centering
\subfigure[]{
\label{Figa}
\includegraphics[width=0.49\textwidth]{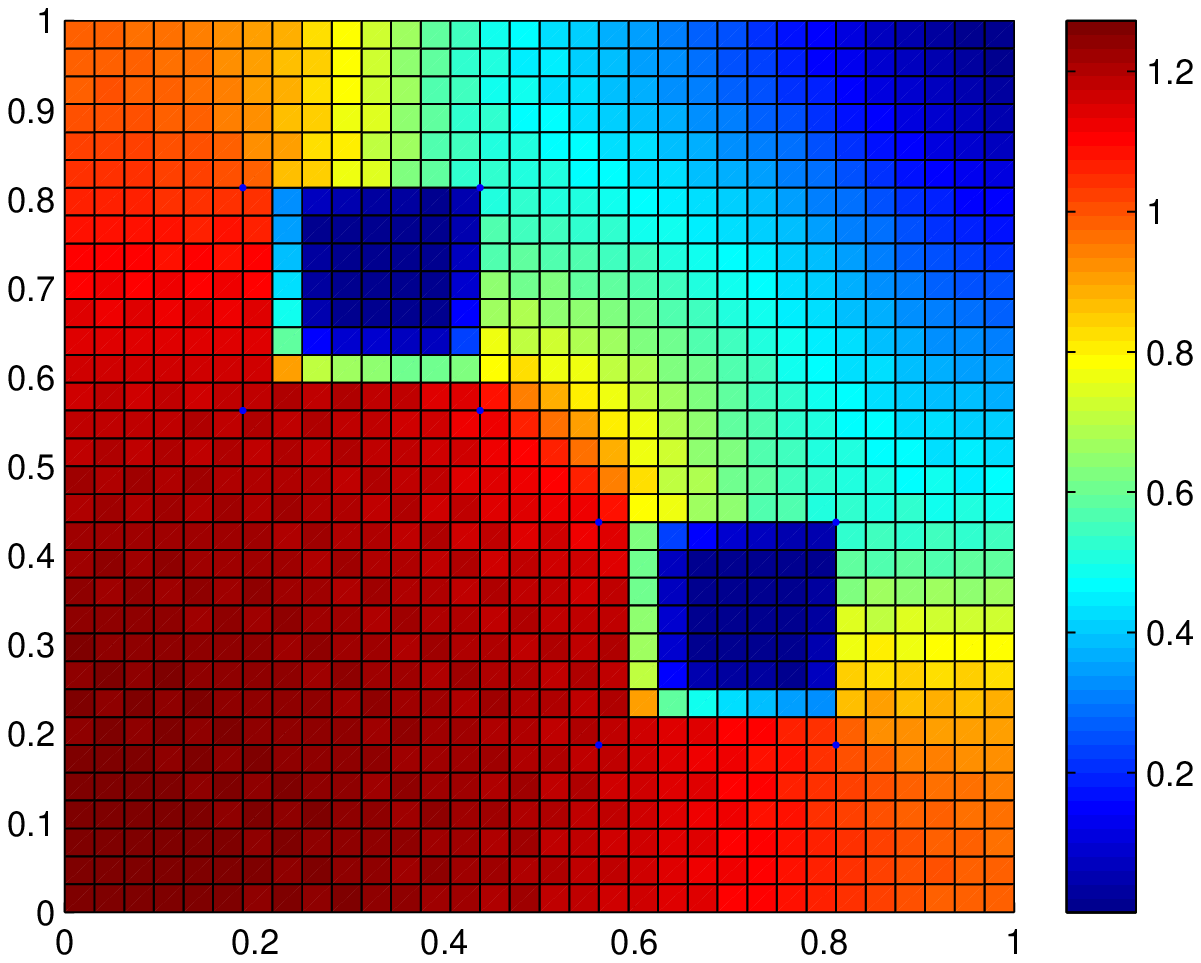}}
\subfigure[]{
\label{Figb}
\includegraphics[width=0.49\textwidth]{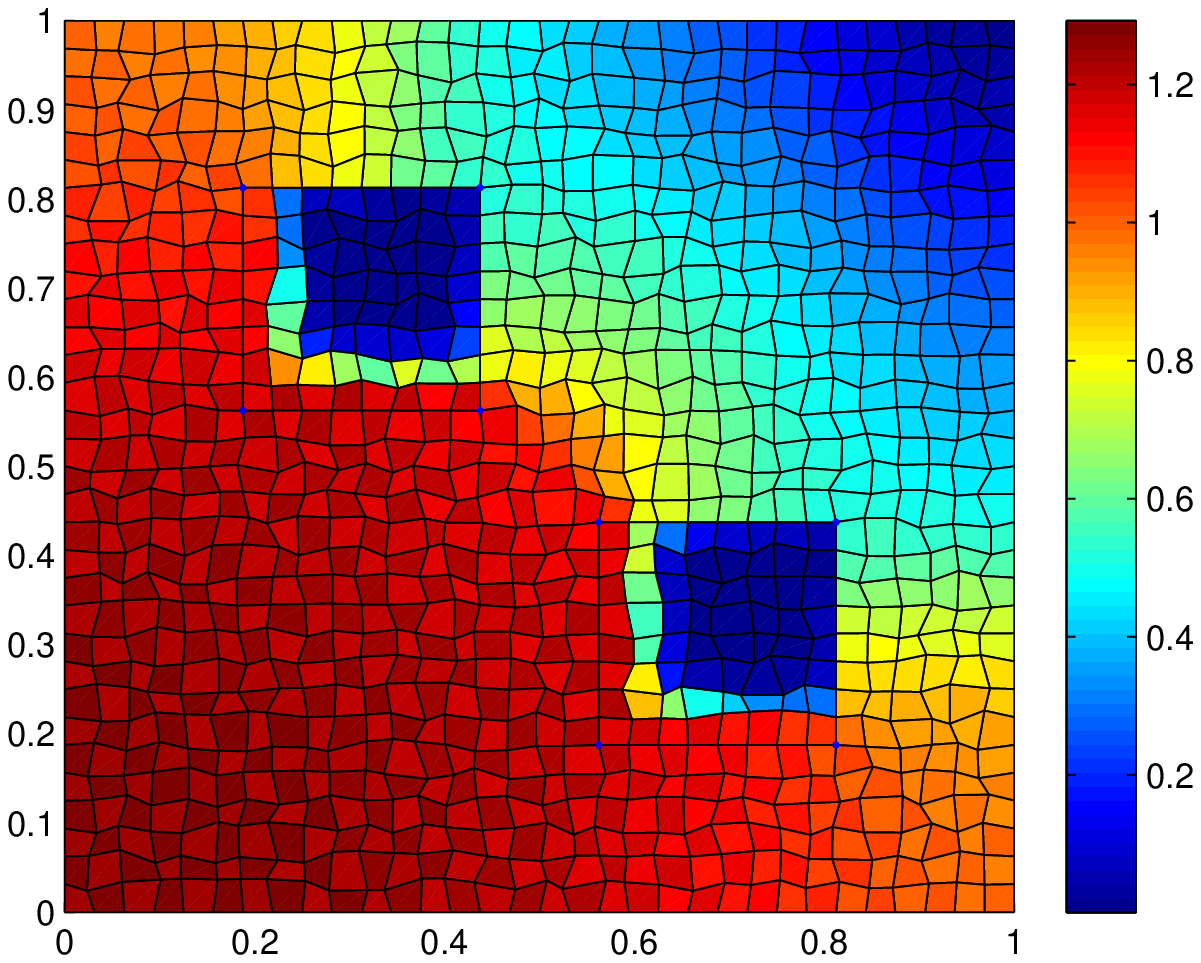}}
\caption{Solution profile on uniform(left) and random quadrilateral meshes(right) for standard FVE method.}
\label{Fig8}
\end{figure}

\begin{figure}[H]
\centering
\subfigure[]{
\label{Figa}
\includegraphics[width=0.49\textwidth]{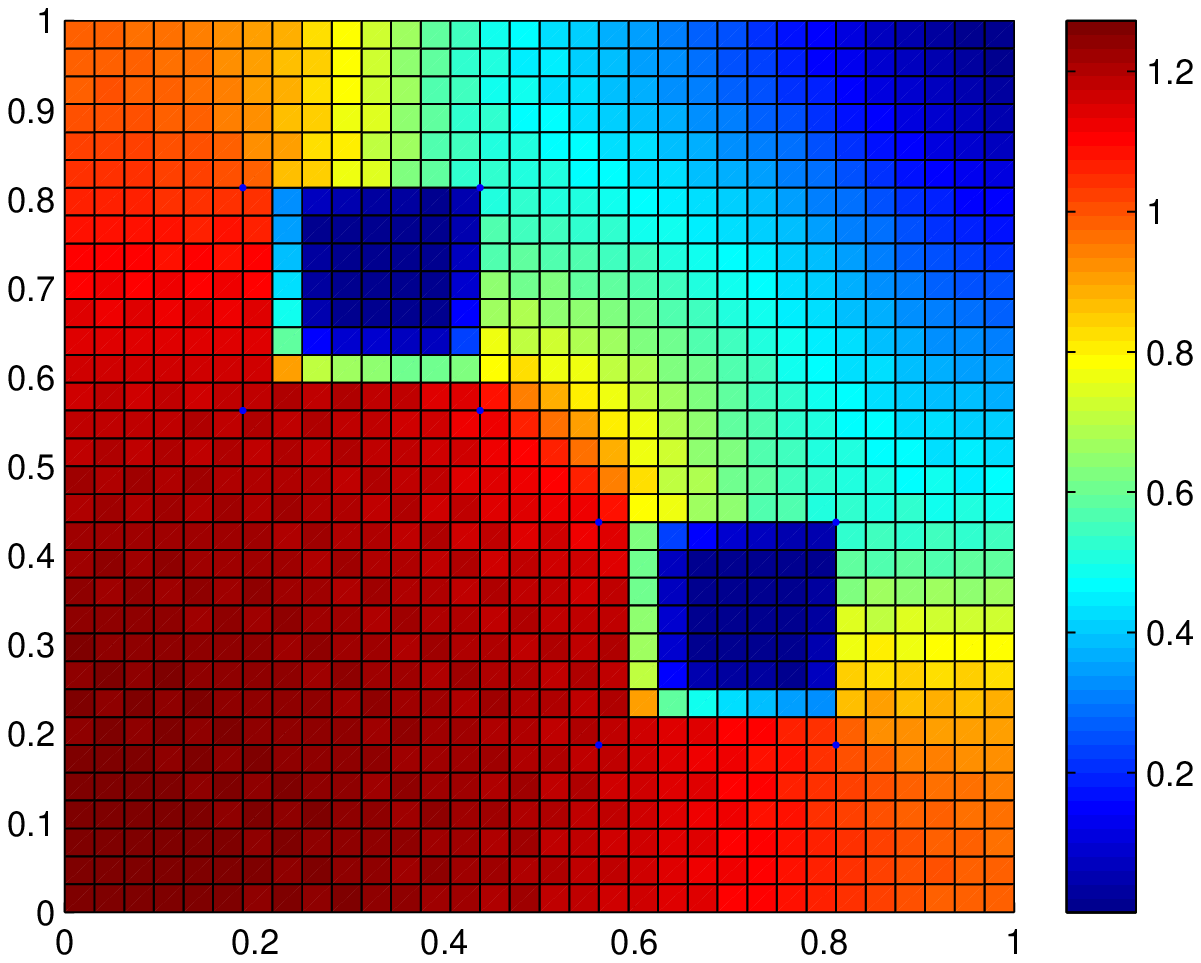}}
\subfigure[]{
\label{Figb}
\includegraphics[width=0.49\textwidth]{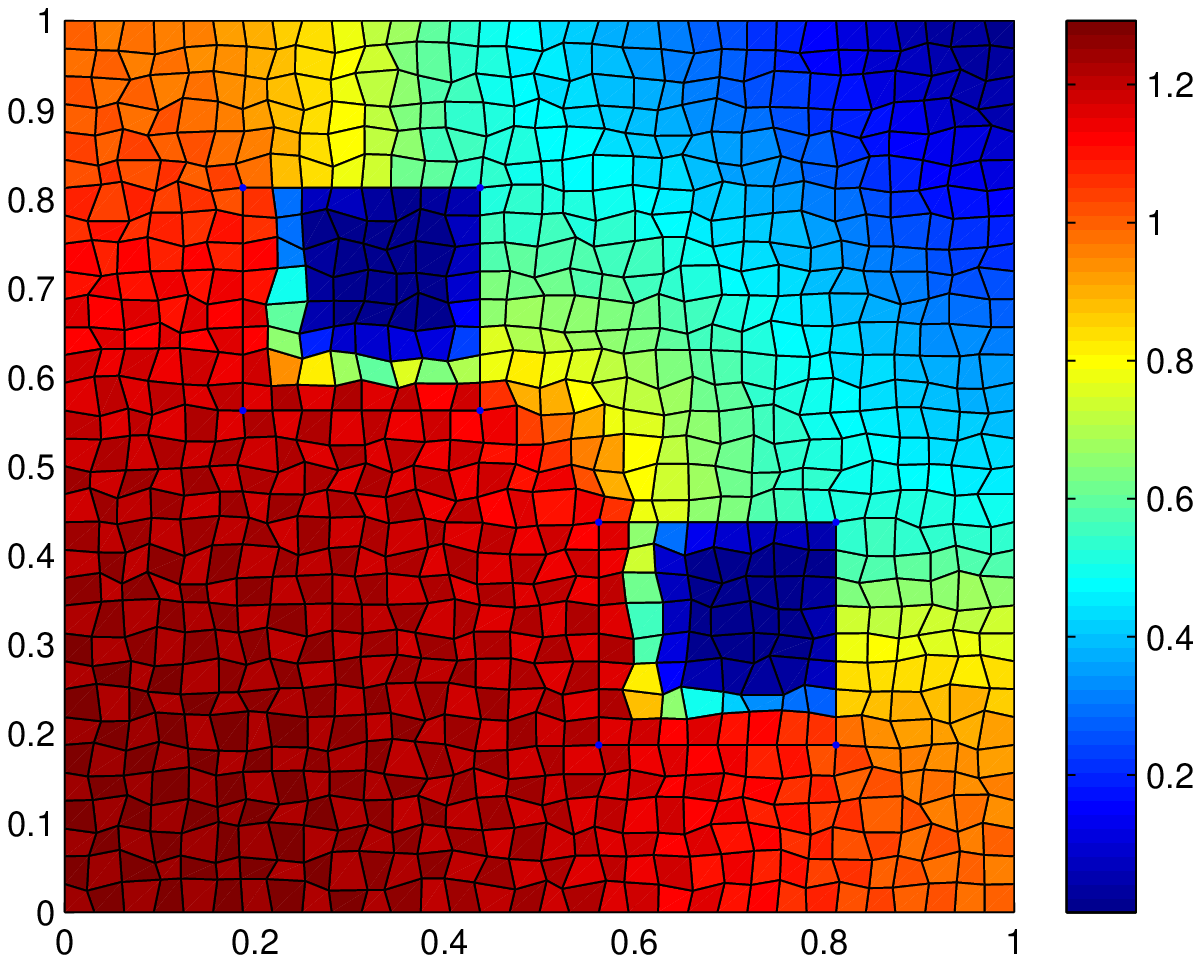}}
\caption{Solution profile on uniform(left) and random quadrilateral meshes(right) for monotone FVE scheme.}
\label{Fig9}
\end{figure}
To examine the computational efficiency of the new monotone FVE scheme, we apply it to the equilibrium radiation diffusion problem.
We provide a brief description of this equation. See \cite{Equilibrium1,Equilibrium2} for a more detailed discussion.
Consider the following nonlinear parabolic partial differential equation:
\begin{align*}
 \frac{\partial E}{\partial t}=\nabla \cdot D_{L}(E) \nabla E ~~~~~~ x\in \Omega, 0<t\leq T,
\end{align*}
where $\Omega=(0,1)^2$, $E$ is the dimensionless gray radiation energy density, and $D_L(E)$ is the Larsen¡¯s form of the flux-limited diffusion coefficient defined as
\begin{align*}
D_{L}(E)=\bigg( \frac{1}{D(E)^2}+\frac{\mid \nabla E\mid^2}{E^2}\bigg)^{-\frac{1}{2}}.
\end{align*}
Here, $D(E)=Z^{-3}E^{\frac{3}{4}}$, and $Z$ is the atomic number of the medium, which can be a discontinuous function. The value for $Z$ is 1 everywhere, except in the two obstacles defined by
\begin{align*}
\frac{3}{16}<x<\frac{7}{16},~~~~~\frac{9}{16}<y<\frac{13}{16}
\end{align*}
and
\begin{align*}
\frac{9}{16}<x<\frac{13}{16},~~~~~\frac{3}{16}<y<\frac{7}{16},
\end{align*}
where the value for $Z$ is 10. The boundary condition is
\begin{align*}
\frac{\partial E}{\partial x}\mid_{x=0}=\frac{\partial E}{\partial x}\mid_{x=1}=\frac{\partial E}{\partial y}\mid_{y=0}=\frac{\partial E}{\partial y}\mid_{y=1}=0.
\end{align*}
The initial radiation energy is given by
\begin{align*}
E(r)=0.001+100\text{exp}\big[-(\frac{r}{0.1})^{2}\big],
\end{align*}
where $r=\sqrt{x^2+y^2}.$

\begin{table}
\caption{$L_2$-norm of the solution for the proposed scheme.}
\label{tab:10}
\begin{center}
\begin{tabular}{lll}
\hline\noalign{\smallskip}
   & $L_2$-norm on uniform meshes & $L_2$-norm on random quadrilateral  meshes \\
\noalign{\smallskip}\hline\noalign{\smallskip}
Standard FVE method  & 0.8854 & 0.8868       \\
Monotone FVE scheme  & 0.8854 & 0.8867  \\
\noalign{\smallskip}\hline
\end{tabular}
\end{center}
\end{table}
\begin{table}
\caption{Average number of iterations and CPU time on random quadrilateral meshes.}
\label{tab:11}
\begin{center}
\begin{tabular}{llll}
\hline\noalign{\smallskip}
   & Number of nonlinear iterations & Number of linear iterations & CPU time\\
\noalign{\smallskip}\hline\noalign{\smallskip}
Standard FVE method & 20.64075 & 32.654  &   3.85(h)\\
Monotone FVE scheme & 20.67585 & 20.000  &   1.12(h) \\
\noalign{\smallskip}\hline
\end{tabular}
\end{center}
\end{table}
For the time discretization, we use the backward Euler method. We take the time step size as $5.0e-4$ and let the final state be $T=1.0$.

Solution profiles for standard FVE method and monotone FVE scheme are presented in Figures \ref{Fig8} and \ref{Fig9}, respectively. In comparing Figures \ref{Fig8} and \ref{Fig9}, we observe that the solution obtained by these two methods are nearly the same. Moreover, the variation tendencies of the solutions on random quadrilateral meshes are in accordance with those on rectangular meshes for monotone FVE scheme. In Table \ref{tab:10}, we give the $L_2$-norm of the solution on rectangular and random quadrilateral meshes. From this table, it is apparent that the $L_2$-norm of the solution for each scheme on random quadrilateral meshes is close to that on rectangular meshes.

Next, we give the comparison of computational costs between our scheme and the standard FVE scheme. Table \ref{tab:11} provides the average number of nonlinear iterations per time step, the average number of linear iterations per nonlinear iteration and total CPU time on random quadrilateral meshes. There are nine nonzero elements in each row for the linear system of the standard FVE method; however, there are only five nonzero elements in each row for the linear system of our scheme. Hence, our scheme reduces the costs by almost half compared with the standard FVE method when the total number of linear iterations is the same. Table \ref{tab:11} shows that the proposed monotone FVE scheme requires fewer linear iterations in each time step than the standard FVE method and the total CPU time is reduced significantly. Hence, the proposed monotone FVE scheme is more efficient than the standard FVE method.
\section{Conclusion}
In this paper, we construct a nonlinear positivity-preserving FVE scheme on quadrilateral meshes, where the one-sided flux is approximated by the standard iso-parametric bilinear element. The new scheme is positivity-preserving by applying a nonlinear two-point flux technique and is applicable for both continuous and discontinuous anisotropic problems. Compared with the FV schemes, the main feature of this monotone FVE scheme is that the decomposition of co-normals and the introduction of auxiliary unknowns are both avoided. Numerical results show that the proposed monotone FVE scheme do not produce non-physical solutions and can reach nearly optimal convergence rates in both the $L_2$-norm and $H^1$-semi-norm. From the performance in solving the equilibrium radiation diffusion equation, it can be observed that the presented scheme can significantly reduce computational costs compared with the standard FVE method.

\end{document}